\crefname{assumption}{assumption}{assumptions}
\Crefname{figure}{Figure}{Figures}
\crefname{equation}{}{}
\Crefname{equation}{Eq.}{Eqs.}
\setlist[enumerate,2]{label=(\alph*),ref=\theenumi(\alph*)}
\setlist[enumerate,3]{label=\roman*.,ref=\theenumii\roman*}
\providecommand\given{\:\vert\:}
\DeclarePairedDelimiterXPP\set[1]{}{\{}{\}}{}{
\renewcommand\given{\nonscript\:\delimsize\vert\nonscript\:\mathopen{}}
#1}
\DeclarePairedDelimiter\N{\|}{\|}
\def\wtd{\widetilde}
\def\what{\widehat}
\def\ee{\mathrm{e}}
\DeclareMathOperator{\diff}{d\!}
\DeclareMathOperator{\T}{T}
\DeclareMathOperator{\range}{\mathcal{R}}
\newtheorem{theorem}{Theorem}[section]
\newtheorem{lemma}{Lemma}[section]
\theoremstyle{definition}
\newtheorem{definition}{Definition}[section]
\numberwithin{equation}{section}
\numberwithin{figure}{section}
\numberwithin{table}{section}
\renewcommand\*{\discretionary{\,\text{$\cdot$}}{}{}}
\newcommand\clue[2]{\stackrel{\makebox[0pt][c]{\scriptsize #1}}{#2}\;}
\DeclareMathOperator{\toepL}{\mathcal{L}}
\DeclareMathOperator{\opE}{\mathrm{E}}
\newcommand\E[2][*]{\opE\set#1{#2}}
\newcommand\bs[1]{\boldsymbol{ #1}}
\newcommand\op[1]{\mathscr{ #1}}
\newcommand\restrict{\vert}
\newcommand\liang[1]{ #1 }
\def\shrink{}
\title{Stochastic algebraic Riccati equations are almost as easy as deterministic ones \liang{theoretically}
}
\author{
	Zhen-Chen Guo\thanks{Department of Mathematics, Nanjing University, Nanjing 210093, China. (\texttt{e-mail: guozhenchen@nju.edu.cn}). 
Supported in part by NSFC-11901290. 
} \and 
Xin Liang\thanks{Corresponding author. Yau Mathematical Sciences Center, Tsinghua University, Beijing 100084, China, and 
Yanqi Lake Beijing Institute of Mathematical Sciences and Applications, Beijing 101408, China.
(\texttt{e-mail: liangxinslm@tsinghua.edu.cn}).
Supported in part by NSFC-11901340.
} 
}
\date{\today}
\begin{document}
\maketitle

\begin{abstract}
	Stochastic algebraic Riccati equations, \liang{also known as} rational algebraic Riccati equations, arising in linear-quadratic optimal control for stochastic linear time-invariant systems, were considered to be not easy to solve. The-state-of-art numerical methods most rely on differentiability or continuity, such as Newton-type method, LMI method, or homotopy method.
In this paper, we will build a novel theoretical framework and reveal the intrinsic algebraic structure appearing in this kind of algebraic Riccati equations. This structure guarantees that to solve them is almost as easy as to solve deterministic/classical ones, which will shed light on the theoretical analysis and numerical algorithm design for this topic.
\end{abstract}

\smallskip
{\bf Key words.} 
algebraic Riccati equations, stochastic control, linear-quadratic optimal control, left semi-tensor product,
Toeplitz, symplectic.

\smallskip
{\bf AMS subject classifications}. 
93B11, 65F45, 49N10, 93E03, 93E20

\section{Introduction}\label{sec:introduction}
Algebraic Riccati equations (AREs) arise in various models related to control theory, especially in linear-quadratic optimal control design.
The deterministic/classical ones are considered for the deterministic linear time-invariant systems, including discrete-time algebraic Riccati equations (DAREs)
\[
	X=A^{\T}XA+Q-(A^{\T}XB+L)(R+B^{\T}XB)^{-1}(B^{\T}XA+L^{\T}),
\]
and continuous-time algebraic Riccati equations (CAREs)
\[
	A^{\T}X+XA+Q-(XB+L)R^{-1}(B^{\T}X+L^{\T})=0.
\]
During many years, people have developed rich theoretical results and numerical methods for the DAREs and CAREs. 
Readers are referred to \cite{mehrmann1991automomous,lancasterR1995algebraic,ionescuOW1999generalized,biniIM2012numerical,huangLL2018structurepreserving,bennerBKS2020numerical} to obtain an overview for both theories and algorithms.
In comparison, the stochastic/rational ones are considered for the stochastic linear time-invariant systems, including stochastic discrete-time algebraic Riccati equations (SDAREs)
\begin{equation}\label{eq:sdare}
	\begin{multlined}[b]
		X=A_{\liang{0}}^{\T}XA_{\liang{0}} + \sum_{i=1}^{\liang{r-1}} A_i^{\T}XA_i+Q
	\\\qquad\qquad\qquad
		-(A_{\liang{0}}^{\T}XB_{\liang{0}}+\sum_{i=1}^{\liang{r-1}} A_i^{\T}XB_i+L)
	(B_{\liang{0}}^{\T}XB_{\liang{0}}+\sum_{i=1}^{\liang{r-1}} B_i^{\T}XB_i+R)^{-1}(B_{\liang{0}}^{\T}XA_{\liang{0}}+\sum_{i=1}^{\liang{r-1}} B_i^{\T}XA_i+L^{\T}),
	\end{multlined}
\end{equation}
and stochastic continuous-time algebraic Riccati equations (SCAREs)
\begin{equation}\label{eq:scare}
	\begin{multlined}[t]
		A_{\liang{0}}^{\T}X+XA_{\liang{0}} + \sum_{i=1}^{\liang{r-1}} A_i^{\T}XA_i+Q
		-(XB_{\liang{0}}+\sum_{i=1}^{\liang{r-1}} A_i^{\T}XB_i+L)
		(\sum_{i=1}^{\liang{r-1}} B_i^{\T}XB_i+R)^{-1}(B_{\liang{0}}^{\T}X+\sum_{i=1}^{\liang{r-1}} B_i^{\T}XA_i+L^{\T})=0.
	\end{multlined}
\end{equation}
Here $\liang{r-1}$ is the number of stochastic processes involved in the stochastic systems dealt with,
and it is easy to check that for the case $r=\liang{1}$ SDAREs and SCAREs degenerate to DAREs and CAREs respectively.
Due to the complicated forms, one may recognize it would be much more difficult to analyze their properties and obtain their solutions.
There are still literature, e.g., \cite{damm2004rational,draganMS2010mathematical,draganMS2013mathematical}, discussing the stochastic linear systems and the induced stochastic AREs.

As we can see, the stochastic AREs are still algebraic, and it is quite natural to ask whether algebraic methods could be developed to solve them.
However, limited by lack of clear algebraic structures, to the best of the authors' knowledge, nearly all of the existing algorithms are based on the differentiability or continuity of the equations, such as Newton's method \cite{damm2004rational,dammH2001newtons}, modified Newton's method \cite{guo2001iterative,ivanov2007iterations,chuLLW2011modified}, Lyapunov/Stein iterations \cite{fanWC2016smith,ivanov2007properties,takahashiKSS2009numerical}, comparison theorem based method \cite{freilingH2003properties,freilingH2004class}, LMI's (linear matrix inequality) method \cite{ramiZ2000linear,iidukaY2012computational}, and homotopy method \cite{zhangFCW2015homotopy}.

The key to the problem is the algebraic structures behind the equations.
In this paper, we will build up a simple and clear algebraic interpretation of SDAREs and SCAREs with the help of the so-called left semi-tensor product.
In the analysis we find out the Toeplitz structure and the symplectic structure appearing in the equations, and illustrate the fact that the fixed point iteration and the doubling iteration are also valid for them.
The algebraic structures found here will shed light on the theoretical analysis and numerical algorithms design, 
and strongly imply that stochastic AREs are almost as easy as deterministic ones.

The rest of the paper is organized as follows.
First some notations and a brief description of the left semi-tensor product are given immediately.
\Cref{sec:stochastic-discrete-time-algebraic-riccati-equations} and \cref{sec:stochastic-continuous-time-algebraic-riccati-equations:scare} are devoted to describe the algebraic structures in SDAREs and SCAREs respectively.
At last some concluding remarks are given in \Cref{sec:conclusion}.

\subsection{Notations}\label{ssec:notations}
In this paper,
${\mathbb R}$ is the set of all real numbers.
${\mathbb R}^{n\times m}$ is the set of all $n\times m$ real matrices,
${\mathbb R}^n={\mathbb R}^{n\times 1}$, and ${\mathbb R}={\mathbb R}^1$.
$I_n$ (or simply $I$ if its dimension is clear from the context) is the $n\times n$ identity matrix. 
\liang{
Given a matrix $X$,
}
$X^{\T}$, $\N{X}$, and  $\rho(X)$ are its transpose, induced norm, and spectral radius respectively.
Given a linear operator $\op X$,
$\op X^{*}$, $\N{\op X}$, and  $\rho(\op X)$ are its adjoint, norm, and spectral radius respectively.
For a symmetric matrix $X$,
$X\succ 0$ ($X\succeq 0$) indicates its positive (semi-)definiteness, and $X\prec 0$ ($X\preceq 0$) if $-X\succ 0$ ($-X\succeq 0$).

Some easy identities are given:
\begin{equation}\label{eq:easy}
	U(I+V^{\T}U)=(I+UV^{\T})U,\qquad
	U(I+V^{\T}U)^{-1}=(I+UV^{\T})^{-1}U.
\end{equation}
Here is the Sherman-Morrison-Woodbury formula: 
\begin{equation}\label{eq:smwf}
	(M + UDV^{\T})^{-1} = M^{-1} - M^{-1} U (D^{-1} + V^{\T} M^{-1} U)^{-1} V^{\T} M^{-1}.
\end{equation}
The inverse sign in \cref{eq:easy,eq:smwf} indicates invertibility.

\subsection{Left semi-tensor product}\label{ssec:left-semi-tensor-product}
The left semi-tensor product, first defined in 2001 \cite{cheng2001semitensor}, has many applications in system and control theory, such as  Boolean networks \cite{chengQ2009controllability} and electrical systems \cite{xueM2008new}.
Please seek more information in the monograph \cite{cheng2019dimensionfree}.

By $A \otimes B$ denote the Kronecker product of the matrices $A$ and $B$.
For $A\in \mathbb{R}^{m\times n}, B\in \mathbb{R}^{p\times q}$, define 
the left semi-tensor product of $A$ and $B$:
\[
	A\ltimes B:=
\begin{dcases*}
	(A\otimes I_{p/n})B & if $n\mid p$,\\ 
	A(B\otimes I_{n/p}) & if $p\mid n$. 
\end{dcases*}
\]
This product satisfies:
\begin{itemize}
	\item $(A\ltimes B)\ltimes C=A\ltimes (B\ltimes C)$ (so the parenthesis can be omitted);
	\item $(A+ B)\ltimes C=A\ltimes C+B\ltimes C, A\ltimes (B+C)=A\ltimes B+A\ltimes C$;
	\item $(A\ltimes B)^{-1}=B^{-1}\ltimes A^{-1}$;
	\item $(A\ltimes B)^{\T}=B^{\T}\ltimes A^{\T}$;
	\item $\!\!\begin{bmatrix}
			A_{11} &\! A_{12} \\ A_{21} &\! A_{22}\\
			\end{bmatrix}\!\ltimes\!\begin{bmatrix}
			B_{11} &\! B_{12} \\ B_{21} &\! B_{22}\\
			\end{bmatrix}\!\!=\!\!\begin{bmatrix}
			A_{11}\ltimes B_{11}+A_{12}\ltimes B_{21} &\! 
			A_{11}\ltimes B_{12}+A_{12}\ltimes B_{22} \\
			A_{21}\ltimes B_{11}+A_{22}\ltimes B_{21} &\! 
			A_{21}\ltimes B_{12}+A_{22}\ltimes B_{22} \\
		\end{bmatrix}\!$.
\end{itemize}
The left semi-tensor product, which satisfies the same arithmetic laws as the classical matrix product, can be treated as the matrix product in the following sections.  
Briefly, we write $A^{\ltimes k}=\underbrace{A\ltimes A\ltimes \dots \ltimes A}_k$.

\section{SDARE}\label{sec:stochastic-discrete-time-algebraic-riccati-equations}

Consider the SDARE \cref{eq:sdare}
where $A_i, Q\in \mathbb{R}^{n\times n}$, $B_i\in \mathbb{R}^{n\times m}$, $L\in \liang{\mathbb{R}^{n\times m}}$ and $R\in \mathbb{R}^{m\times m}$ with 
$\begin{bmatrix}
	Q& L \\ L^{\T} & R
\end{bmatrix}\succeq 0$. 
It is easy to see that $X$ is a solution 
if and only if
$X^{\T}$ is a solution.
In control theory, usually only symmetric solutions to \cref{eq:sdare} are needed.
Hence in the paper, we only consider the symmetric solutions.

The SDARE~\cref{eq:sdare} arises from linear time-invariant stochastic discrete-time control systems: 
\begin{equation}\label{eq:sdare-system}
	\begin{aligned}
		x_{t+1} &= A_{\liang{0}} x_{t} + B_{\liang{0}}u_{t}+\sum_{i=1}^{\liang{r-1}}(A_ix_{t}+B_iu_{t}) w_{i,t},
		\\
		z_{t} &= \liang{C_z}x_{t} + \liang{D_z}u_{t},
	\end{aligned}
\end{equation}
where $x_t,u_t,z_t$ are states, inputs, measurements, respectively,
and $\set{w_t=\begin{bmatrix}
		w_{1,t} &\!\!\cdots &\!\! w_{\liang{r-1},t}
\end{bmatrix}^{\T}}$ is a sequence of independent random vectors satisfying
$\E{w_t}=0,\E{w_tw_t^{\T}}=I_{\liang{r-1}}$.
Let $\set{\sigma(w_0,w_1,\dots,w_t)\given t=0,1,\dots}$ be the related $\sigma$-algebra filtration.
Write $
\bs u=\set{u_{k}}_{k\in \mathbb{N}}$.
Considering the  stochastic discrete-time control system \cref{eq:sdare-system}, the goal is to minimize the cost functional with respect to $\bs u$ when $x_0$ is given: 
\begin{equation}\label{eq:cost-function}
	J(x_0,\bs u) = \E{\sum_{t=0}^\infty \begin{bmatrix}
			x_{t} \\ u_{t}
			\end{bmatrix}^{\T}\begin{bmatrix}
			Q & L \\ L^{\T} & R
			\end{bmatrix}\begin{bmatrix}
			x_{t} \\ u_{t}
	\end{bmatrix}}.
\end{equation}

Assume the following conditions hold throughout this section:
\begin{enumerate}[(\mbox{D}1)]
	\item \label{item:item1} $R\succ 0$; 
	\item \label{item:item2} the pair $(\set{A_i}_{i=0}^{\liang{r-1}}, \set{B_i}_{i=0}^{\liang{r-1}})$ is stabilizable, namely there exists $F\in \mathbb{R}^{m\times n}$ such that the linear operator 
		$
		\op S_F\colon \liang{\mathbb{R}^{n\times n}\to \mathbb{R}^{n\times n}}, S\mapsto\\ \begin{bmatrix}
				A_{\liang{0}}+B_{\liang{0}}F &\!\! A_1+B_1F&\!\! \cdots &\!\! A_{\liang{r-1}}+B_{\liang{r-1}}F
			\end{bmatrix}\*(I_r\otimes S) \begin{bmatrix}
			A_{\liang{0}}+B_{\liang{0}}F &\!\! A_1+B_1F&\!\!\cdots &\!\! A_{\liang{r-1}}+B_{\liang{r-1}}F
		\end{bmatrix}^{\T}
		$ is exponentially stable, or equivalently, 
		\[
			\rho(\op S_F)=\rho\left(\sum_{i=0}^{\liang{r-1}} (A_i+B_iF)\otimes (A_i+B_iF)\right)<1;
		\]
	\item \label{item:item3} the pair $(\set{A_i}_{i=0}^{\liang{r-1}}, C)$ is detectable with $C\in \mathbb{R}^{l\times n}$ satisfying $C^{\T}C=Q-LR^{-1}L^{\T}$, that is, $(\set{A_i^{\T}}_{i=0}^{\liang{r-1}}, \set{C_i^{\T}}_{i=0}^{\liang{r-1}})$ is stabilizable for $C_0=C$ and $C_i=0$ for $i=1,\cdots, {\liang{r-1}}$. 
\end{enumerate}
It is known that if the assumption above holds,
then \cref{eq:sdare} has a unique 
positive semi-definite stabilizing solution $X_{\star}$, see, e.g., \liang{\cite[Theorem~5.14]{draganMS2010mathematical}}.
Here, $X$ is called a stabilizing solution if $\op S_{F_X}$ is exponentially stable with 
\begin{equation}\label{eq:F}
	F_X=-(\liang{\sum_{i=0}^{r-1}} B_i^{\T}XB_i+R)^{-1}(\liang{\sum_{i=0}^{r-1}} A_i^{\T}XB_i+L)^{\T}.
\end{equation} 
In fact,  $X_{\star}$ is  a stabilizing solution 
if and only if
the zero equilibrium of the closed-loop system 
\begin{align*}
	x_{t+1} &= (A_{\liang{0}} +B_{\liang{0}} F_\star)x_{t} +\sum_{i=1}^{\liang{r-1}}(A_ix_{t}+B_iF_\star x_{t}) w_{i,t}
\end{align*}
is strongly exponentially stable in the mean square \liang{\cite[Remark~5.11]{draganMS2010mathematical}}, where $F_\star=F_{X_\star}$ is as in \cref{eq:F} with $X=X_\star$.
Moreover, the cost functional \cref{eq:cost-function} has an  optimal control $u_t=F_\star x_t$. 


\subsection{Fixed point iteration and Toeplitz structure}\label{ssec:fixed-point-iteration}
We first compute the equivalent form of \cref{eq:sdare}. 
Define 
$
	\wtd A = \begin{bmatrix}
		A_{\liang{0}} \\ A_1 \\ \vdots \\ A_{\liang{r-1}}
	\end{bmatrix}, 
	\wtd B = \begin{bmatrix}
		B_{\liang{0}} \\ B_1 \\ \vdots \\ B_{\liang{r-1}}
	\end{bmatrix},
$
then \cref{eq:sdare} is equivalent to 
\[
	\begin{multlined}[t]
		X = \wtd A^{\T} (I_{\liang{r}}\otimes X)\wtd A + Q - (\wtd A^{\T}(I_{\liang{r}}\otimes X)\wtd B+L)
		(\wtd B^{\T}(I_{\liang{r}}\otimes X)\wtd B+R)^{-1}(\wtd A^{\T}(I_{\liang{r}}\otimes X)\wtd B+L)^{\T}.
	\end{multlined}
\]
Let $\Pi$ be the permutation satisfying $\Pi^{\T}(X\otimes I_{\liang{r}})\Pi=I_{\liang{r}}\otimes X$, and define $\liang{A} = \Pi(\wtd A-\wtd BR^{-1}L^{\T}), \liang{B}=\Pi\wtd BR^{-1/2}$. 
Noticing $C^{\T}C=Q-LR^{-1}L^{\T}$,
\cref{eq:sdare} is further equivalent to  
\begin{equation}\label{eq:sdare-new}
	X =
		\liang{A}^{\T}\ltimes X\ltimes \liang{A}+C^{\T}C-\liang{A}^{\T}\ltimes X\ltimes \liang{B}(\liang{B}^{\T}\ltimes X\ltimes \liang{B}+I_m)^{-1}\liang{B}^{\T}\ltimes X\ltimes \liang{A}
		.
\end{equation}
Also $F_\star $ 
is rewritten as 
\begin{equation*}
	F_\star =-R^{-1}L^{\T}-R^{-1/2}\liang{B}^{\T}\ltimes X_{\star} \ltimes(I_{\liang{rn}}+\liang{B}\liang{B}^{\T}\ltimes X_{\star})^{-1}\ltimes \liang{A}, 
\end{equation*}
leading to 
\begin{equation}\label{eq:close-loop-matrix}
	\wtd A + \wtd B F_\star  = \Pi^{\T}(I_{rn}+\liang{B}\liang{B}^{\T}\ltimes X_{\star})^{-1}\ltimes \liang{A}.
\end{equation}

\liang{By \cref{eq:smwf}
the} equivalent form \cref{eq:sdare-new} leads us to consider
	a standard form of SDARE:
\begin{equation}\label{eq:sdare-equivalent1}
		X =A^{\T}\ltimes X \ltimes(I_{rn}+BB^{\T}\ltimes X )^{-1}\ltimes A+C^{\T}C
		:= \op D(X),
\end{equation}
where $A\in \mathbb{R}^{rn\times n}, B\in \mathbb{R}^{rn\times m},C\in \mathbb{R}^{l\times n}$ \liang{and $\op D\colon \mathbb{R}^{n\times n}\to \mathbb{R}^{n\times n}$}.
It is clear to see that \cref{eq:sdare-equivalent1}
is exactly the same as the classical DARE except that the matrix product is replaced by the left semi-tensor product, and it
is reduced to the DARE if $r=1$. 

Encouraging by the theory of DARE, one may solve the SDARE~\cref{eq:sdare-equivalent1} by the fixed point iteration:
\begin{equation}\label{eq:fixed-point}
	\begin{aligned}
		X_0& = 0,
\qquad X_1= C^{\T}C,
		\\
		X_{t+1}& =\op D(X_t) = A^{\T}\ltimes X_t \ltimes(I_{rn}+BB^{\T}\ltimes X_t )^{-1}\ltimes A+C^{\T}C.
	\end{aligned}
\end{equation}
\Cref{thm:convergence-of-fixed-point-iteration-for-sdare} analyzes the convergence of the fixed point iteration \cref{eq:fixed-point}.
\begin{theorem}[Convergence of fixed point iteration for SDAREs]\label{thm:convergence-of-fixed-point-iteration-for-sdare}
	\begin{enumerate}
		\item 
			\label{lm:sdare:monotonic}
			The operator $\op D$ is monotonic on the set consisting of all positive semi-definite matrices with respect to the partial order ``$\,\succeq$''.
			In detail, if $Z_1\succeq0,Z_2\succeq0$, then
			$
			Z_1\succeq Z_2 \Rightarrow \op D(Z_1)\succeq\op D(Z_2).
			$
		\item 
			\label{lm:fixed-point-convergence}
			The sequence $\set{X_t}$  generated by the fixed point iteration \cref{eq:fixed-point} is monotonically nondecreasing, and converges to the unique positive semi-definite  stabilizing solution $X_\star$ of the SDARE~\cref{eq:sdare-equivalent1}.
			Moreover, the sequence is either finite or monotonically increasing \liang{(i.e., for any $t$, $X_{t+1}\succeq X_t, X_{t+1}\ne X_t$)}.
		\item 
			\label{lm:convergence-rate}
			The sequence $\set{X_t}$  generated by the fixed point iteration \cref{eq:fixed-point} converges R-linearly.
			In detail, \liang{there exists $Y\in \mathbb{R}^{n\times n},Y\succ 0$ such that
			\begin{equation}\label{eq:R-linearly-convergence}
				X_t\succeq X_\star -(\op S_{F_\star}^*)^t\left( X_\star [ I_n -Y X_\star]^{-1} \right),
			\end{equation}
			which implies
				$\lim\limits_{t\to\infty}\left(\frac{\N{X_t-X_\star}}{\N{X_\star}}\right)^{1/t}\le
				\liang{\rho(\op S_{F_\star})}<1.
				$
			Here $(\op S_{F_\star}^*)^t$ is the $t$ compositions of the adjoint of the operator $\op S_{F_\star}$.}
	\end{enumerate}
\end{theorem}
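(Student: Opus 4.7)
For Parts~\ref{lm:sdare:monotonic} and~\ref{lm:fixed-point-convergence}, the key tool is the closed-loop rewriting of $\op D$. For any matrix $F$ of suitable size, define $\op D_F(X) := (A+BF)^{\T}\ltimes X\ltimes (A+BF) + C^{\T}C + F^{\T}F$; a direct completion of squares yields
\[
	\op D_F(X) - \op D(X) = (F - F^\circ_X)^{\T}(I+B^{\T}\ltimes X\ltimes B)(F - F^\circ_X) \succeq 0,
\]
where $F^\circ_X := -(I+B^{\T}\ltimes X\ltimes B)^{-1}B^{\T}\ltimes X\ltimes A$. Hence $\op D(X)=\min_F \op D_F(X)$ in the Loewner order, with each $\op D_F$ affine and monotone in $X$. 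Monotonicity of $\op D$ then follows: for $Z_1\succeq Z_2\succeq 0$, $\op D(Z_1) = \op D_{F^\circ_{Z_1}}(Z_1) \succeq \op D_{F^\circ_{Z_1}}(Z_2) \succeq \op D_{F^\circ_{Z_2}}(Z_2) = \op D(Z_2)$. An induction in $t$ using this monotonicity, with base case $X_0=0\preceq X_\star$ and $\op D(X_\star)=X_\star$, gives $0 = X_0\preceq X_1\preceq \cdots \preceq X_\star$; the Loewner-monotone bounded sequence converges to some fixed point $X_\infty\preceq X_\star$ of $\op D$ by continuity, and uniqueness of the positive semi-definite solution under (D1)--(D3) (cf.\ \cite[Theorem~5.14]{draganMS2010mathematical}) forces $X_\infty=X_\star$. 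The dichotomy in~\ref{lm:fixed-point-convergence} is immediate: $X_{t+1}=X_t$ makes $X_t$ a positive semi-definite fixed point of $\op D$, hence $X_t=X_\star$; otherwise $X_{t+1}\succeq X_t$ with $X_{t+1}\ne X_t$.

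For Part~\ref{lm:convergence-rate}, I would first derive an exact error recursion. Setting $e_t := X_\star - X_t$, $A_{cl,X} := (I+BB^{\T}\ltimes X)^{-1}\ltimes A$, and $Q_t := B(I+B^{\T}\ltimes X_t\ltimes B)^{-1}B^{\T}$ (symmetric with $0 \preceq Q_t \preceq BB^{\T}$), the identity $M_1^{-1}-M_2^{-1}=M_1^{-1}(M_2-M_1)M_2^{-1}$ combined with $(I+BB^{\T}\ltimes X)^{-1}B=B(I+B^{\T}\ltimes X\ltimes B)^{-1}$ (a consequence of \cref{eq:easy}) yields, after the $\Pi$-unwrapping already used between \cref{eq:sdare-new,eq:close-loop-matrix},
\[
	e_{t+1} = \op S_{F_\star}^{*}(e_t) + A_{cl,\star}^{\T}\ltimes e_t\ltimes Q_t\ltimes e_t\ltimes A_{cl,\star}.
\]
The leading term is the Fr\'echet linearization of $\op D$ at $X_\star$; the second is a positive semi-definite correction, quadratic in $e_t$.

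To close the inductive bound $e_t \preceq (\op S_{F_\star}^{*})^t(X_\star[I-YX_\star]^{-1})$, I would use the crude estimate $Q_t \preceq \N{BB^{\T}}I_{rn}$ together with $e_t\preceq X_\star$ to dominate the correction by $c_t\,\op S_{F_\star}^{*}(e_t)$ for some $c_t\geq 0$, giving $e_{t+1}\preceq(1+c_t)\op S_{F_\star}^{*}(e_t)$. A bootstrapping argument based on $\rho(\op S_{F_\star}^{*})<1$ shows that $\N{e_t}\to 0$ geometrically and the product $P:=\prod_t(1+c_t)$ is finite, so by linearity $e_t \preceq (\op S_{F_\star}^{*})^t(PX_\star)$. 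It then remains to exhibit $Y\succ 0$ with $\rho(YX_\star)<1$ such that $X_\star[I-YX_\star]^{-1}\succeq PX_\star$; the Neumann expansion $X_\star[I-YX_\star]^{-1}=X_\star+X_\star YX_\star+X_\star(YX_\star)^{2}+\cdots$ provides enough slack to choose such a $Y$ (for instance as a suitably scaled $n\times n$ reduction of $BB^{\T}$). Once the bound is established, Gelfand's formula on the linear operator $\op S_{F_\star}^{*}$ gives $\N{(\op S_{F_\star}^{*})^t(W)}^{1/t}\to \rho(\op S_{F_\star}^{*})=\rho(\op S_{F_\star})<1$ by (D2), producing the R-linear convergence claim.

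The main obstacle I foresee is twofold: verifying that the multiplicative factor $\prod_t(1+c_t)$ is uniformly finite (a chicken-and-egg issue requiring the spectral-radius, not merely operator-norm, contraction), and pinning down a single $Y\succ 0$ that works for all $t$ simultaneously. The correction naturally lives in the $rn$-lifted block world through $e_t\otimes I_r$ and $Q_t$, whereas $Y$ must be $n\times n$, so translating between these scales through the semi-tensor product and the permutation $\Pi$ without losing tightness is where the delicate bookkeeping sits.
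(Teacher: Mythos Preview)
Parts~\ref{lm:sdare:monotonic} and~\ref{lm:fixed-point-convergence} are correct, by a route different from the paper's. The paper proves monotonicity of $\op D$ via direct inverse manipulations (first for $Z_2\succ 0$, then by a limiting argument for singular $Z_2$); your min-over-feedbacks representation $\op D(X)=\min_F \op D_F(X)$, with each $\op D_F$ affine and Loewner-monotone, is a clean alternative and a standard device in Riccati theory. The convergence argument in Part~\ref{lm:fixed-point-convergence} is then essentially the same in both.

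For Part~\ref{lm:convergence-rate} your outline diverges from the paper and, as you yourself flag, is incomplete. Your error recursion $e_{t+1}=\op S_{F_\star}^{*}(e_t)+A_\star^{\T}\ltimes e_t\ltimes Q_t\ltimes e_t\ltimes A_\star$ is correct, and the bootstrapping you sketch can in fact deliver the limsup rate: from $e_t\to 0$ (Part~\ref{lm:fixed-point-convergence}) one gets $c_t<\epsilon$ eventually, hence $\limsup\|e_t\|^{1/t}\le(1+\epsilon)\rho(\op S_{F_\star})$ for every $\epsilon>0$, whence summability of $c_t$ and finiteness of $P=\prod_t(1+c_t)$, giving $e_t\preceq (\op S_{F_\star}^*)^t(PX_\star)$. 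The real gap is the last step, producing the specific bound \cref{eq:R-linearly-convergence}: your suggestion to take $Y$ as ``a suitably scaled $n\times n$ reduction of $BB^{\T}$'' does not work, since $BB^{\T}\in\mathbb{R}^{rn\times rn}$ has no canonical $n\times n$ reduction and the inequality you need, $X_\star(I-YX_\star)^{-1}\succeq PX_\star$, has nothing to do with $B$ at all---it is a pure statement about $X_\star$ and the scalar $P$, and requires a different construction of $Y$.

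The paper sidesteps the whole bootstrapping/choice-of-$Y$ issue by a more structural argument. It telescopes the error recursion into an \emph{exact} closed form
\[
e_t=(A_\star^{\ltimes t})^{\T}\ltimes X_\star\ltimes\bigl(I_{r^tn}-\op S_\ltimes^t(0)\ltimes X_\star\bigr)^{-1}\ltimes A_\star^{\ltimes t},
\]
where $\op S_\ltimes(Z)=B_\star\otimes I+A_\star\ltimes Z\ltimes A_\star^{\T}$ is a monotone affine map acting between spaces of increasing dimension. The matrix $Y\succ0$ is then obtained \emph{constructively} from a Lyapunov certificate: stability of $\op S_{F_\star}$ yields $Z\succ0$ with $Z^{-1}\otimes I_r\succ A_\star Z^{-1}A_\star^{\T}$, and a scaling produces $Y=\alpha^{-1}Z^{-1}$ satisfying $\op S_\ltimes(Y)\preceq Y\otimes I_r$. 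Monotonicity of $\op S_\ltimes$ then gives $\op S_\ltimes^t(0)\preceq Y\otimes I_{r^t}$ uniformly in $t$, which, inserted into the exact formula, yields \cref{eq:R-linearly-convergence} directly---so the paper's $Y$ encodes a stability certificate rather than a post-hoc algebraic fit.
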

\begin{proof}
	First prove \cref{lm:sdare:monotonic}.
	Suppose $Z_2\succ0$ and thus $Z_2$ is nonsingular.
	Then
	\begin{align*}
		Z_1\succeq Z_2
		&\Leftrightarrow Z_1^{-1}\preceq Z_2^{-1}
		\\&\Leftrightarrow \left(( Z_1^{-1}\otimes I_r)+BB^{\T}\right)^{-1}\succeq \left(( Z_2^{-1}\otimes I_r)+BB^{\T}\right)^{-1}
		\\&\Leftrightarrow ( Z_1\otimes I_r)\left(I_{\liang{rn}}+BB^{\T}( Z_1\otimes I_r)\right)^{-1}\succeq ( Z_2\otimes I_r)\left(I_{\liang{rn}}+BB^{\T}( Z_2\otimes I_r)\right)^{-1}
		\\&\Rightarrow \op D(Z_1)\succeq\op D(Z_2)
		.
	\end{align*}
	If $Z_2$ is singular, then $Z_2+\varepsilon I\succ0$ for any $\varepsilon>0$.
	Thus, taking limits yields
	\[
		Z_1\succeq Z_2
		\Leftrightarrow Z_1+\varepsilon I \succeq Z_2+\varepsilon I
		\Rightarrow \op D(Z_1+\varepsilon I)\succeq\op D(Z_2 + \varepsilon I)
		\Rightarrow \op D(Z_1)\succeq\op D(Z_2)
		.
	\]

	Then turn to \cref{lm:fixed-point-convergence}.
	Since $X_1=C^{\T}C\succeq X_0=0$, by \cref{lm:sdare:monotonic} we have $X_2=\op D(X_1)\succeq\op D(X_0)=X_1$.
	Similarly $0=X_0\preceq X_1\preceq X_2\preceq\dots\preceq X_t\preceq\cdots$, namely the sequence $\set{X_t}$ generated by \cref{eq:fixed-point} is monotonic.
	On the other hand, let $X_{\star}\succeq 0$ be the stabilizing  solution of the SDARE~\cref{eq:sdare-equivalent1}.
	Then it follows from \cref{lm:sdare:monotonic} that $X_{\star}=\op D(X_{\star})\succeq \op D(X_0)= X_1$, and similarly $X_{\star}\succeq X_t$ for any $t$, implying that $X_{\star}$ is an upper bound of $\set{X_t}_{t=0}^{\infty}$.  
	Hence $X_t$ converges. 
	Since the limit of $X_t$ is a fixed point of \cref{eq:sdare-equivalent1}, namely a positive semi-definite solution of SDARE, by the uniqueness of the positive semi-definite solution, $X_t\to X_{\star}$. 
	On the other hand, if for some $t$, $X_t=X_{t+1}=\op D(X_t)$, then $X_t$ is a fixed point, namely a positive semi-definite solution, which forces $X_t=X_\star$. In other words, the iteration terminates in finite steps.

	Finally show \cref{lm:convergence-rate}.
	\liang{
	Write
	\[
		A_\star=(I_{\liang{rn}}+BB^{\T}\ltimes X_\star)^{-1}\ltimes A\in \mathbb{R}^{rn\times n}, \quad
	B_\star=(I_{\liang{rn}}+BB^{\T}\ltimes X_\star)^{-1}BB^{\T}\in \mathbb{R}^{rn\times rn}.
\]
	Note that $B_\star= B(I_{\liang{m}}+B^{\T}\ltimes X_\star\ltimes B)^{-1}B^{\T}\succeq0$ by \cref{eq:easy}.
	Then the adjoint of $\op S_{F_\star}$ is $\op S_{F_\star}^*\colon \mathbb{R}^{n\times n}\to \mathbb{R}^{n\times n},
	S\mapsto \sum_{i=0}^{r-1}(A_i+B_iF_\star)^{\T}S(A_i+B_iF_\star)=A_\star^{\T}(S\otimes I_r)A_\star=A_\star^{\T}\ltimes S\ltimes A_\star$,
	and $\rho(\op S_{F_\star}^*)=\rho(\op S_{F_\star})$.
	For $Z\in \mathbb{R}^{r^kn\times r^kn}$, define a family of operators $\op S_{\ltimes}\colon \mathbb{R}^{r^kn\times r^kn}\to \mathbb{R}^{r^{k+1}n\times r^{k+1}n},
	Z\mapsto B_\star\otimes I_{r^k} + A_\star\ltimes Z\ltimes A_\star^{\T}$. 
	It is easy to verify that 
	 $\op S_{\ltimes}(Z\otimes I_r)
	=\op S_{\ltimes}(Z)\otimes I_r$,
	and
	$Z_1\succeq Z_2
	\Rightarrow \op S_{\ltimes}(Z_1)\succeq \op S_{\ltimes}(Z_2)$, namely
	$\op S_{\ltimes}$ is monotonically nondecreasing. 

	For any $t$, write $\Delta_t:=X_\star-X_t$, and then
	\begin{align*}
		\Delta_t&=X_\star-X_t
		=\op D(X_\star)-\op D(X_{t-1})
		\\&\clue{\cref{eq:easy}}{=}A^{\T}\ltimes (I_{\liang{rn}}+X_\star \ltimes BB^{\T})^{-1}\ltimes X_\star\ltimes A
		-A^{\T}\ltimes X_{t-1}\ltimes (I_{\liang{rn}}+BB^{\T}\ltimes X_{t-1})^{-1}\ltimes A
		\\&=\underbrace{A^{\T}\ltimes (I_{\liang{rn}}+X_\star\ltimes BB^{\T})^{-1}}_{A_\star^T}\ltimes (X_\star-X_{t-1})\ltimes(I_{\liang{rn}}+BB^{\T}\ltimes X_{t-1})^{-1}\ltimes A
		\\[-\baselineskip]
		&=A_\star^{\T}\ltimes \Delta_{t-1} \ltimes (I_{\liang{rn}}+BB^{\T}\ltimes [X_\star-\Delta_{t-1}])^{-1}\overbrace{(I_{\liang{rn}}+BB^{\T}\ltimes X_\star)\ltimes A_\star}^{A}
		\\
		&=A_\star^{\T}\ltimes \Delta_{t-1}\ltimes \left(I_{\liang{rn}} - (I_{\liang{rn}}+BB^{\T}\ltimes X_\star)^{-1}BB^{\T}\ltimes \Delta_{t-1}\right)^{-1}\ltimes A_\star
		\\
		&=A_\star^{\T}\ltimes \Delta_{t-1}\ltimes \left(I_{\liang{rn}} - B_\star\ltimes \Delta_{t-1}\right)^{-1}\ltimes A_\star
		.
	\end{align*}
	Then we may obtain the relation between $\Delta_t$ and $\Delta_{t-2}$:
	\begin{align*}
		\Delta_t
		&=
		\begin{multlined}[t]
			A_\star^{\T}\ltimes \left[ A_\star^{\T}\ltimes \Delta_{t-2} \ltimes (I_{\liang{rn}}-B_\star\ltimes \Delta_{t-2})^{-1}\ltimes A_\star \right]
			\ltimes \left(I_{\liang{rn}}-B_\star\ltimes\left[A_\star^{\T}\ltimes \Delta_{t-2} \ltimes (I_{\liang{rn}}-B_\star\ltimes \Delta_{t-2})^{-1}\ltimes A_\star \right]  \right)^{-1}\ltimes A_\star
		\end{multlined}	
		\\
		&\clue{\cref{eq:easy}}{=}
		(A_\star^{\ltimes 2})^{\T}\ltimes \Delta_{t-2}
		\ltimes \left(I_{\liang{r^2n}}- (B_\star\ltimes \Delta_{t-2})\otimes I_r - A_\star\ltimes B_\star\ltimes A_\star^{\T}\ltimes \Delta_{t-2}   \right)^{-1}\ltimes A_\star^{\ltimes 2}
		.
	\end{align*}
	Since $(B_\star\ltimes \Delta_{t-2})\otimes I_r=(B_\star(\Delta_{t-2}\otimes I_r))\otimes I_r=(B_\star\otimes I_r)(\Delta_{t-2}\otimes I_{r^2})=(B_\star\otimes I_r)\ltimes\Delta_{t-2}$,
	\[
		\Delta_t
		=(A_\star^{\ltimes 2})^{\T}\ltimes \Delta_{t-2}
		\ltimes \left(I_{\liang{r^2n}}- \op S_{\ltimes}(B_\star)\ltimes \Delta_{t-2}\right)^{-1}\ltimes A_\star^{\ltimes 2}
		.
	\]
	Similarly, substituting $\Delta_{t-2}$ with its expression of $\Delta_{t-3}$,  we also have
	\[
		\Delta_t
		=
		(A_\star^{\ltimes 3})^{\T}\ltimes \Delta_{t-3}
		\ltimes \left(I_{\liang{r^3n}}- \op S_{\ltimes}^2(B_\star)\ltimes \Delta_{t-3}\right)^{-1}\ltimes A_\star^{\ltimes 3}
		,
	\]
	where $\op S_{\ltimes}^2=\op S_{\ltimes}\op S_{\ltimes}$ is the composition. By induction,
	\begin{align*}
		\Delta_t
		&=
			(A_\star^{\ltimes t})^{\T}\ltimes \Delta_0
			\ltimes \left(I_{\liang{r^tn}}- \op S_{\ltimes}^{t-1}(B_\star)\ltimes \Delta_0\right)^{-1}\ltimes A_\star^{\ltimes t}
		\\&=
			(A_\star^{\ltimes t})^{\T}\ltimes X_\star 
			\ltimes \left(I_{\liang{r^tn}}- \op S_{\ltimes}^t(X_0)\ltimes X_\star\right)^{-1}\ltimes A_\star^{\ltimes t}
			,
	\end{align*}
	for $X_0=0_{n\times n},\Delta_0=X_\star-X_0=X_\star, \op S_{\ltimes}(X_0)=B_\star$.

	We claim that the following holds, which will be proved soon later:
	\begin{equation}\label{eq:lm:convergence-rate:claim}
		\exists\, Y\succ 0\in \mathbb{R}^{n\times n} \quad\text{s.t.}\quad
		\op S_{\ltimes}(Y)\preceq Y\otimes I_r.
	\end{equation}
	Then by the properties of $\op S_{\ltimes}$, from $X_0\prec Y$ we infer $\op S_{\ltimes}^t(X_0)\preceq \op S_{\ltimes}^t(Y)\preceq \op S_{\ltimes}^{t-1}(Y\otimes I_r)=\op S_{\ltimes}^{t-1}(Y)\otimes I_r\preceq\dots\preceq Y\otimes I_{r^t}$.
	Thus,
\begin{align*}
	\Delta_t
	&= 
	(A_\star^{\ltimes t})^{\T}\ltimes X_\star^{1/2} \ltimes 
	\left( I_{\liang{r^tn}} - X_\star^{1/2} \ltimes \op S_{\ltimes}^t(X_0) \ltimes X_\star^{1/2}\right)^{-1}\ltimes X_\star^{1/2} \ltimes A_\star^{\ltimes t}
	\\
	& \preceq (A_\star^{\ltimes t})^{\T}\ltimes X_\star^{1/2} \ltimes 
	\left( I_{\liang{n}} - X_\star^{1/2} Y  X_\star^{1/2}\right)^{-1}\ltimes X_\star^{1/2} \ltimes A_\star^{\ltimes t}
	\\
	&= (A_\star^{\ltimes t})^{\T}\ltimes X_\star
	\left( I_{\liang{n}} -Y  X_\star\right)^{-1}\ltimes A_\star^{\ltimes t}
	\\
	&= (\op S_{F_\star}^*)^t\left( X_\star 
	( I_{\liang{n}} -Y  X_\star)^{-1}\right),
\end{align*}
namely \cref{eq:R-linearly-convergence}.
	Then by the  Gel'fand Theorem,
	\[
		\lim_{t\to\infty}\left(\frac{\N{\Delta_{t}}}{\N{X_\star}}\right)^{1/t}
		\le\lim_{t\to\infty}\N{(\op S_{F_\star}^*)^{t}}^{1/t}\N{( I_{\liang{n}} -Y  X_\star)^{-1}}^{1/t}
		=\rho(\op S_{F_\star}^*)
		=\rho(\op S_{F_\star})
		.
	\]

	Afterwards consider the claim \cref{eq:lm:convergence-rate:claim}.
	Since $X_\star$ is the unique positive semi-definite stabilizing solution,
	the linear Lyapunov operator $\op S_{F_\star}^*$ is exponentially stable, leading that the zero equilibrium of the system
	\[
		y_{t+1} = (A_0 +B_0 F_\star)^{\T}y_{t} +\sum_{i=1}^{r-1}\left((A_i+B_iF_\star)^{\T}y_{t}\right) w_{i,t}
	\]
	is strongly exponentially stable in the mean square \cite[Definition 3.1]{draganMS2010mathematical}.
	Then by \cite[Corollary 4.2]{draganMS2010mathematical}, there exists $Z\succ 0\in \mathbb{R}^{n\times n}$ satisfying 
	\[
			0\succ \begin{bmatrix}
					-Z & \!\!\!\!\!\!\!\!(\wtd A+\wtd BF_\star)^T(I_r\otimes Z) \\
					(I_r\otimes Z)(\wtd A+\wtd BF_\star) & -I_r\otimes Z\\
				\end{bmatrix}
				\clue{\cref{eq:close-loop-matrix}}{=}
				\begin{bmatrix}
					-Z & A_\star^T(Z\otimes I_r)\Pi \\
					\Pi^T(Z\otimes I_r)A_\star & \!\!-\Pi^T(Z\otimes I_r)\Pi\\
				\end{bmatrix}.
	\]
	Thus, considering the Schur complement gives
	\begin{align*}
		0&\succ -\Pi^T(Z\otimes I_r)\Pi+\Pi^T(Z\otimes I_r)A_\star Z^{-1}A_\star^T(Z\otimes I_r)\Pi 
	\\&= -\Pi^T(Z\otimes I_r)\left[Z^{-1}\otimes I_{r}- A_\star Z^{-1}A_\star^T\right](Z\otimes I_r)\Pi 
	,
	\end{align*}
	and hence $Z^{-1}\otimes I_{r}-A_\star Z^{-1}A_\star^T\succ 0$.
	Since $B_\star\succeq0$, 
	there exists $\alpha >0$ such that $Z^{-1}\otimes I_{r}- A_\star Z^{-1}A_\star^T\succeq \alpha B_\star$. 
	Then $Y=\frac{1}{\alpha}Z^{-1}$ guarantees the claim \cref{eq:lm:convergence-rate:claim}.
}
\end{proof}

Moreover, the sequence $\set{X_t}$ has a closed form, namely a non-iterative expression, as is shown in \cref{thm:fixedpoint}.
Just like what happens in DAREs \cite{guoL2022intrinsic}, the key to the form is the Toeplitz structure, defined as follows.

Given $A_{i}\in \mathbb{R}^{r^ip_1\times p_2}$ for $i=0,1,\cdots, m-1$, 
write the ${p_1\frac{r^m-1}{r-1}\times p_2\frac{r^m-1}{r-1}}$ matrix
\begin{equation*}\label{eq:toepL}
	\toepL_{r,p_1, p_2}\!\left(\!\begin{bmatrix}
			A_0\\A_1\\\vdots\\A_{m-1}
	\end{bmatrix}\!\right)\!
	=\!\!\begin{bmatrix}
		A_0     &                      &        &                             &                          &                        \\
		A_1     &\! A_0      \otimes I_r &        &                             &                          &                        \\
		A_2     &\! A_1     \otimes I_r  & \ddots &                             &                          &                        \\
		\vdots  & \ddots               & \ddots & \ddots                      &                          &                        \\
		\vdots  &                      & \ddots &\! A_1     \otimes I_{r^{m-3}} &\! A_0  \otimes I_{r^{m-2}} &                        \\
		A_{m-1} & \cdots               & \cdots &\! A_2    \otimes I_{r^{m-3}}  &\! A_1 \otimes I_{r^{m-2}}  &\! A_0\otimes I_{r^{m-1}} \\
	\end{bmatrix}
	\!.
\end{equation*}
For ease, $\toepL_{r,p_1, p_2}(A)=\toepL_{r,p_1, p_2}\left(\begin{bmatrix}
		A_0\\A_1\\\vdots\\A_{m-1}
	\end{bmatrix}\right)$ if $A=\begin{bmatrix}
	A_0\\A_1\\\vdots\\A_{m-1}
\end{bmatrix}$,
and this notation makes no confusion for the subscript $\cdot_{r,p_1, p_2}$ demonstrates how the matrix is composed.
Note that $\toepL_{r,p_1, p_2}(A)$ degenerates to a block-Toeplitz matrix in the case $r=1$.
In this paper it is called a $\ltimes$-block-Toeplitz matrix.

\begin{theorem}[Toeplitz structure in SDAREs]\label{thm:fixedpoint}
	Write 
	\begin{equation}\label{eq:noniter:X:UVT}
		V_t = \begin{bmatrix}
			C \\ C\ltimes A \\ C\ltimes A^{\ltimes 2}\\ \vdots \\ C\ltimes A^{\ltimes (t-1)}
		\end{bmatrix} 
		_{\liang{\frac{r^t-1}{r-1}l\times n}}\shrink\shrink\shrink\shrink\shrink\shrink
		,\quad
		T_t = \toepL_{r,l, m}\left(\begin{bmatrix}
				0_{l\times m}\\
				C\ltimes B \\ C\ltimes A\ltimes B \\ \vdots\\ C\ltimes A^{\ltimes (t-2)}\ltimes B
		\end{bmatrix}\right)
		_{\liang{\frac{r^t-1}{r-1}l\times \frac{r^t-1}{r-1}m}}\shrink\shrink\shrink\shrink\shrink\shrink\shrink\shrink
		,
	T_1=0. 
	\end{equation}
	Then the terms of the sequence $\set{X_t}$ generated by the fixed point iteration  \cref{eq:fixed-point} are
	\begin{equation}\label{eq:noniter:X:X}
		X_t
		= V_t^{\T}(I+T_tT_t^{\T})^{-1}V_t
		, \qquad t=1,2,\dots.
	\end{equation}

	As a result of \cref{lm:fixed-point-convergence} of \cref{thm:convergence-of-fixed-point-iteration-for-sdare} and \cref{eq:noniter:X:X},  
	the unique stabilizing solution $X_\star$ has an operator expression 
	\[
		X_{\star}=\op V^{*}(I+\op T \op T^{\T})^{-1} \op V,
		\;\text{where}\;
		\op V =\begin{bmatrix}
			C \\ C\ltimes A \\ C\ltimes A^{\ltimes 2} \\ C\ltimes A^{\ltimes 3} \\ \vdots 
		\end{bmatrix} 
		,
		\op T =\toepL_{r,l, m}\left( \begin{bmatrix}
		0 \\ C\ltimes B \\ C\ltimes A\ltimes B \\ C\ltimes A^{\ltimes 2}\ltimes B \\ \vdots\end{bmatrix} \right)
		.
	\]
\end{theorem}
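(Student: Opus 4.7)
The plan is to prove \cref{eq:noniter:X:X} by induction on $t$, and then pass to the limit for the operator form. The key observation is the block-recursive structure
\[
V_{t+1} = \begin{bmatrix} C \\ V_t \ltimes A \end{bmatrix}, \qquad
T_{t+1} = \begin{bmatrix} 0 & 0 \\ V_t \ltimes B & T_t \otimes I_r \end{bmatrix},
\]
which can be read off directly from \cref{eq:noniter:X:UVT}; in particular, the bottom-right factor $T_t \otimes I_r$ reflects the $\otimes I_{r^k}$ pattern built into $\toepL_{r,l,m}$. Since $V_t \ltimes A = (V_t\otimes I_r)A$ and $V_t \ltimes B = (V_t\otimes I_r)B$, and $(T_t \otimes I_r)(T_t\otimes I_r)^{\T} = (T_t T_t^{\T}) \otimes I_r$, multiplying this out gives the block-diagonal form
\[
I + T_{t+1}T_{t+1}^{\T} = \begin{bmatrix} I_l & 0 \\ 0 & (V_t\ltimes B)(V_t\ltimes B)^{\T} + (I + T_t T_t^{\T}) \otimes I_r \end{bmatrix}.
\]

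The base case $t=1$ is immediate: $T_1=0$ and $V_1=C$ yield $V_1^{\T}V_1 = C^{\T}C = X_1$. For the inductive step, assuming $X_t = V_t^{\T}(I + T_tT_t^{\T})^{-1}V_t$, the top-left block contributes $C^{\T}C$ to $V_{t+1}^{\T}(I+T_{t+1}T_{t+1}^{\T})^{-1}V_{t+1}$, and I would handle the bottom block by applying the Sherman-Morrison-Woodbury formula \cref{eq:smwf} with base $N = (I + T_tT_t^{\T})\otimes I_r$ and rank-$m$ correction $(V_t\ltimes B)(V_t\ltimes B)^{\T}$. The outer $V_t \otimes I_r$ factors collapse using the Kronecker identity
\[
(V_t\otimes I_r)^{\T}\bigl((I + T_tT_t^{\T})\otimes I_r\bigr)^{-1}(V_t\otimes I_r) = \bigl(V_t^{\T}(I+T_tT_t^{\T})^{-1}V_t\bigr)\otimes I_r = X_t \otimes I_r,
\]
reducing the bottom contribution to
\[
A^{\T}(X_t \otimes I_r) A - A^{\T}(X_t \otimes I_r) B\bigl(I_m + B^{\T}(X_t\otimes I_r)B\bigr)^{-1} B^{\T}(X_t \otimes I_r) A.
\]

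Translating the $\otimes I_r$ factors into $\ltimes$-notation via $A^{\T}(X_t\otimes I_r)A = A^{\T}\ltimes X_t \ltimes A$ (and analogously for the $B$-terms), then applying \cref{eq:smwf} a second time in the opposite direction, recasts this expression as $A^{\T}\ltimes X_t \ltimes (I + BB^{\T}\ltimes X_t)^{-1}\ltimes A$. Summing with $C^{\T}C$ gives $\op D(X_t) = X_{t+1}$, closing the induction. The operator form of $X_\star$ then follows from the observation that the recursions above place new entries in $V_{t+1},T_{t+1}$ without altering any prior entries of $V_t,T_t$, so $V_t$ and $T_t$ are natural truncations of $\op V$ and $\op T$; taking $t\to\infty$ via \cref{lm:fixed-point-convergence} of \cref{thm:convergence-of-fixed-point-iteration-for-sdare} yields the desired operator expression. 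The main obstacle I foresee is bookkeeping: verifying that $\toepL$'s tensor factors align so that $T_t \otimes I_r$ really is the bottom-right block of $T_{t+1}$, and that every conversion between $\otimes I_r$ and $\ltimes$ and every invocation of Sherman-Morrison-Woodbury is on the correct matrix size.
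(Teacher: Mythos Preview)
Your proposal is correct and is essentially the paper's own argument: the paper also proves \cref{eq:noniter:X:X} by induction using the same block recursions $V_{t+1}=\begin{bmatrix}C\\V_t\ltimes A\end{bmatrix}$ and $T_{t+1}=\begin{bmatrix}0&0\\V_t\ltimes B&T_t\otimes I_r\end{bmatrix}$ together with the push-through/SMW identities \cref{eq:easy,eq:smwf}. The only cosmetic difference is direction---the paper starts from $X_{t+1}=\op D(X_t)$ and manipulates toward $V_{t+1}^{\T}(I+T_{t+1}T_{t+1}^{\T})^{-1}V_{t+1}$, whereas you start from the latter and reduce to $\op D(X_t)$---but the computations are the same.
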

\begin{proof}
	Clearly $X_1=C^{\T}C$.
	Assuming \cref{eq:noniter:X:X} is correct for $t$, we are going to prove it is also correct for $t+1$.
	By the fixed  point iteration  \cref{eq:fixed-point},
	\begin{align*}
		X_{t+1}
		&=A^{\T}\ltimes X_t\ltimes \left(I_{rn}+BB^{\T}\ltimes X_t\right)^{-1}\ltimes A+C^{\T}C
		\\
		&= 
		A^{\T}\ltimes V_t^{\T} \ltimes (I+T_tT_t^{\T})^{-1} \ltimes V_t \ltimes 
		(I_{rn}+BB^{\T}\ltimes V_t^{\T} \ltimes (I+T_tT_t^{\T})^{-1} \ltimes V_t)^{-1}\ltimes
		A+C^{\T}C
		\\
		&\clue{\cref{eq:easy}}{=} 
			A^{\T}\ltimes V_t^{\T} \ltimes (I+T_tT_t^{\T})^{-1}\ltimes 
			\left(I+V_t\ltimes BB^{\T}\ltimes V_t^{\T}\ltimes (I+T_tT_t^{\T})^{-1}\right)^{-1}\ltimes V_t\ltimes 
			A + C^{\T}C
		\\
		&= 
		A^{\T}\ltimes V_t^{\T} \ltimes (I +  (T_tT_t^{\T})\otimes I_r + V_t\ltimes BB^{\T} \ltimes V_t^{\T} )^{-1}\ltimes V_t\ltimes A + C^{\T}C
		\\
		&=
		\begin{bmatrix}
			C \\ V_t\ltimes A
		\end{bmatrix}^{\T}
		\begin{bmatrix}
			I_l &  \\
			& I +  (T_tT_t^{\T})\otimes I_r + V_t\ltimes BB^{\T} \ltimes V_t^{\T}
		\end{bmatrix}^{-1}
		\begin{bmatrix}
			C \\ V_t\ltimes A
		\end{bmatrix}
		\\
		&=
		\begin{bmatrix}
			C \\ V_t\ltimes A
		\end{bmatrix}^{\T}
		\left(I + \begin{bmatrix}
				0\\ V_t\ltimes B &  T_t\otimes I_r
				\end{bmatrix}\begin{bmatrix}
				0\\ V_t\ltimes B &  T_t\otimes I_r
		\end{bmatrix}^{\T}\right)^{-1}
		\begin{bmatrix}
			C \\ V_t\ltimes A
		\end{bmatrix}
		\\
		&=
		V_{t+1}^{\T}(I+T_{t+1}T_{t+1}^{\T})^{-1}V_{t+1}
		.
	\end{align*}

	\liang{Once \cref{eq:noniter:X:X} is obtained, the validity of the operator expression is essentially the same as that of the DARE, see \cite{guoL2022intrinsic}.
	}
\end{proof}

Note that $T_t$ in \cref{eq:noniter:X:UVT} is a $\ltimes$-block-Toeplitz matrix.
In particular, for the case $r=1$, the structure in \cref{eq:noniter:X:X} coincides with that of the DARE \cite{guoL2022intrinsic}. 

Based on the iterative formula \cref{eq:fixed-point} (or, the equivalently non-iterative form \cref{eq:noniter:X:X}) and the convergence result in \cref{lm:convergence-rate} of \cref{thm:convergence-of-fixed-point-iteration-for-sdare},  one can solve the SDARE \cref{eq:sdare-equivalent1} directly by fixed point iteration method, or an analogous FTA method as that for DAREs \cite{guoL2022intrinsic}.

\subsection{Symplectic structure and doubling iteration}\label{ssec:doubling-iteration}

The fixed point iteration $\set{X_{t}}$ from \cref{eq:fixed-point}, or equivalently \cref{eq:noniter:X:X}, converges to the unique positive semi-definite stabilizing solution $X_{\star}$ linearly.
As the doubling iteration is an acceleration of the fixed point iteration for DAREs and CAREs in the sense that
the doubling iteration only computes the terms $X_1, X_2,X_4, \dots, X_{2^k}, \dots $ generated by the fixed point iteration, 
we will show the same acceleration is also valid for SDAREs \cref{eq:sdare-equivalent1}. 

As the symplectic structure plays a fundamental role in the theory of doubling iteration for DAREs, the symplectic-like structure is also necessary for SDAREs, of which the related concepts are defined in the beginning.

\begin{definition}\label{def:SSF1} 
	\begin{enumerate}
		\item 
			The matrix pair  $(M,  L)$ with $M\in \mathbb{R}^{rn\times 2p_1n}, L\in\mathbb{R}^{rn\times 2p_2n}$ is called a symplectic pair with respect to the left semi-tensor product, or a \emph{$\ltimes$-symplectic pair} for short,
			if $M\ltimes J\ltimes M^{\T}=L\ltimes J\ltimes L^{\T}$,
			where $J=\begin{bmatrix}
				0 & I_n \\ -I_n & 0
			\end{bmatrix}$. 
		\item 
			For $M\in \mathbb{R}^{(r+1)n\times 2n}, L\in\mathbb{R}^{(r+1)n\times 2rn}$,
			the $\ltimes$-symplectic pair $(M,  L)$ is called in a first standard symplectic form with respect to the left semi-tensor product under the dimension partition $(1,r)$, or a \emph{$\ltimes$-SSF1 pair} for short, if   
			$M=\begin{bmatrix}A& 0_{rn\times n}\\  H&I_n\end{bmatrix}
			_{\liang{(r+1)n\times 2n}}
			$ and $L=\begin{bmatrix}I_{rn}&G\\   0_{n\times rn}&A^{\T}\end{bmatrix}
			_{\liang{(r+1)n\times 2rn}}
			$, with $G, H$ symmetric.
		\item 
			For $M\in \mathbb{R}^{(r+1)n\times 2n}, L\in\mathbb{R}^{(r+1)n\times 2rn}$,
			assuming
			\[
				\mathcal{N}(M, L)=\set*{(M', L')\given  
					\begin{aligned}[t]
						M'\in \mathbb{R}^{(r^2+1)n\times (r+1)n}, L'\in \mathbb{R}^{(r^2+1)n\times(r^2+r)n},\atop \begin{bmatrix}
							M' &\!\! L'
						\end{bmatrix}\text{has full row rank},   L'\ltimes M=M'\ltimes L
					\end{aligned}
				}\ne \emptyset,
			\]
			the action 
			$(M,  L) \to (M'\ltimes M,  L'\ltimes L)$ 
			is called a doubling transformation of $(M, L)$ with respect to the left semi-tensor product, or \emph{$\ltimes$-doubling transformation} for short, for some $(M', L') \in\mathcal{N}(M, L)$. 
	\end{enumerate}
\end{definition}

Clearly, in the case $r=1$ the $\ltimes$-symplecticity and the $\ltimes$-doubling transformation degenerate to the classical symplecticity and the doubling transformation respectively. 

Now we are ready to state the parallels for SDAREs.

Following \cref{eq:sdare-equivalent1}, it is easy to see 
\begin{equation}\label{eq:sdare-matrix}
	\begin{bmatrix}
		A & 0 \\ -C^{\T}C & I_n
	\end{bmatrix}
	\ltimes 
	\begin{bmatrix}
		I_n \\ X
	\end{bmatrix}
	=
	\begin{bmatrix}
		I_{rn} & BB^{\T} \\ 0 & A^{\T}
	\end{bmatrix}
	\ltimes 
	\begin{bmatrix}
		I_n \\ X
	\end{bmatrix}
	\ltimes \left((I_{\liang{rn}}+BB^{\T} \ltimes X)^{-1}A \right).
\end{equation}
Write
\begin{equation}\label{eq:Theta-Phi}
	\Theta=\begin{bmatrix}
		A & 0 \\ -C^{\T}C & I_n
	\end{bmatrix}
	_{\liang{(r+1)n\times 2n}}
	, \qquad
	\Phi=\begin{bmatrix}
		I_{rn} & BB^{\T} \\ 0 & A^{\T}
	\end{bmatrix}
	_{\liang{(r+1)n\times 2rn}},
\end{equation}
and then
$
\Theta \ltimes J \ltimes \Theta^{\T} = \!\begin{bmatrix}
	0 &\!\!\! A \\ -A^{\T} &\!\!\! 0
\end{bmatrix}\! = \Phi \ltimes J \ltimes \Phi^{\T} 
$,
namely $(\Theta,\Phi)$ is a $\ltimes$-SSF1 pair.
Let
\begin{align*}
	\Theta'
	&
	=\begin{bmatrix}
		A\ltimes (I_{\liang{rn}}+BB^{\T}\ltimes C^{\T}C)^{-1} & 0
		\\
		-A^{\T}\ltimes(I_{\liang{rn}}+C^{\T}C\ltimes BB^{\T})^{-1}\ltimes C^{\T}C & I_n
	\end{bmatrix}
	_{\liang{(r^2+1)n\times (r+1)n}},
	\qquad \\
	\Phi'
	&
	=\begin{bmatrix}
		I_{r^2n} & A\ltimes BB^{\T}\ltimes (I_{\liang{rn}}+C^{\T}C\ltimes BB^{\T})^{-1}
		\\
		0 & A^{\T}\ltimes (I_{\liang{rn}}+C^{\T}C\ltimes BB^{\T})^{-1}
	\end{bmatrix}
	_{\liang{(r^2+1)n\times (r^2+r)n}},
\end{align*}
then $\begin{bmatrix}
	\Theta' &\Phi'
\end{bmatrix}$ has full row rank, and  
$
		\Theta'\ltimes \Phi
		=\Phi'\ltimes \Theta
		$, 
which implies $(\Theta', \Phi')\in \mathcal{N}(\Theta, \Phi)$, and $(\Theta,\Phi)\to (\what \Theta, \what \Phi)=(\Theta'\ltimes \Theta, \Phi'\ltimes \Phi)$ is a $\ltimes$-doubling transformation. 
Simple computations give
\begin{equation}\label{eq:doubling-ltimes}
	\begin{aligned}
		\what \Theta
		&=
		\begin{bmatrix}
			A\ltimes (I_{\liang{rn}}+BB^{\T}\ltimes C^{\T}C)^{-1}\ltimes A & 0
			\\
			-C^{\T}C-A^{\T}\ltimes (I_{\liang{rn}}+C^{\T}C\ltimes BB^{\T})^{-1}\ltimes C^{\T}C \ltimes A & I_n
		\end{bmatrix}
		=:
		\begin{bmatrix}
			\what A & 0
			\\
			-\what H & I
		\end{bmatrix}_{\liang{(r^2+1)n\times 2n}}
		,
		\\
		\what \Phi
		&=
		\begin{bmatrix}
			I_{r^2n} & (BB^{\T}\otimes I_r)+A\ltimes BB^{\T}\ltimes (I_{\liang{rn}}+C^{\T}C\ltimes BB^{\T} )^{-1}\ltimes A^{\T}
			\\
			0 & A^{\T}\ltimes (I_{\liang{rn}}+C^{\T}C\ltimes BB^{\T})^{-1}\ltimes A^{\T}
		\end{bmatrix}
		=:
		\begin{bmatrix}
			I & \what G \\ 0 & \what A^{\T}
		\end{bmatrix}_{\liang{(r^2+1)n\times 2r^2n}}
		,
	\end{aligned}
\end{equation}
where 
\begin{alignat*}{2}
	\what A &= A\ltimes (I_{\liang{rn}}+BB^{\T}\ltimes C^{\T}C)^{-1}\ltimes A &&\liang{\in \mathbb{R}^{r^2n\times n}},
	\\
	\what H &= C^{\T}C+A^{\T}\ltimes (I_{\liang{rn}}+C^{\T}C\ltimes BB^{\T})^{-1}\ltimes C^{\T}C \ltimes A &&\liang{\in \mathbb{R}^{n\times n}},
	\\
	\what G &= (BB^{\T}\otimes I_r)+A\ltimes BB^{\T}\ltimes (I_{\liang{rn}}+C^{\T}C\ltimes BB^{\T} )^{-1}\ltimes A^{\T} &&\liang{\in \mathbb{R}^{r^2n\times r^2n}}.
\end{alignat*}
Clearly, $\what \Theta$ and $\what \Phi$ possess the same structures as $\Theta$ and $\Phi$, respectively.
Without surprising, $(\what \Theta, \what \Phi)$ is also a $\ltimes$-SSF1 pair.
Hence one can pursue another $\ltimes$-doubling transformation on $(\what \Theta,\what \Phi)$, and obtain some new $\ltimes$-SSF1 pair.
Finally a series of $\ltimes$-doubling transformations can be defined to obtain a sequence of $\ltimes$-SSF1 pairs. 

Since those $\ltimes$-symplectic pairs are composed of the triples $(A, G, H)$s,
only the iterative recursions of $(A,G,H)$ are necessary in practical computations rather than the $\ltimes$-symplectic pairs $(\Theta,\Phi)$, whose details are given in \cref{thm:sdare-doubling}.   

\begin{lemma}\label{thm:sdare-doubling}
	Consider the following iterative recursions:
	\begin{subequations}\label{eq:doubling}
		\begin{alignat}{2}
			A_{k+1}&=A_k\ltimes (I_{\liang{r^{2^{k}}n}}+G_k\ltimes H_k)^{-1}\ltimes A_k &&\liang{\in \mathbb{R}^{r^{2^{k+1}}n\times n}},
				\label{eq:sda:A}
				\\
				G_{k+1}&=G_k\otimes I_{\liang{r^{2^{k}}}}\!\!+A_k\ltimes (I_{\liang{r^{2^{k}}n}}\!\!+G_k\ltimes H_k)^{-1}\ltimes G_k\ltimes A_k^{\T} &&\liang{\in \mathbb{R}^{r^{2^{k+1}}n\times r^{2^{k+1}}n}},
				\label{eq:sda:G}
				\\
				H_{k+1}&=H_k+A_k^{\T}\ltimes H_k\ltimes (I_{\liang{r^{2^{k}}n}}+G_k\ltimes H_k)^{-1}\ltimes A_k &&\liang{\in \mathbb{R}^{n\times n}},
				\label{eq:sda:H}
			\end{alignat}
	\end{subequations}
	initially with $A_0=A, G_0=BB^{\T}$ and $H_0=C^{\T}C$. 
	Let $\Theta_k=\begin{bmatrix}
		A_k & 0 \\ -H_k & I_n
		\end{bmatrix}_{\liang{(r^{2^k}+1)n\times 2n}}$ and $\Phi_k=\begin{bmatrix}
		I_{r^{2^k}n} & G_k \\ 0 & A_k^{\T}
	\end{bmatrix}_{\liang{(r^{2^k}+1)n\times 2r^{2^k}n}}$.
	Then the following statements hold:
	\begin{enumerate}
		\item \label{thm:sdare-doubling:item1}
			$(\Theta_k, \Phi_k)$ is a $\ltimes$-SSF1 pair;
		\item \label{thm:sdare-doubling:item3}
			$(\Theta_k,\Phi_k)\to(\Theta_{k+1}, \Phi_{k+1})=(\Theta'_k\ltimes \Theta_k, \Phi'_k\ltimes \Phi_k)$ is a $\ltimes$-doubling transformation, where 
			\begin{align*}
				\Theta'_k
				&
				=\begin{bmatrix}
						A_k\ltimes (I_{\liang{r^{2^k}n}}+G_k\ltimes H_k)^{-1} & 0
						\\
						-A_k^{\T}\ltimes(I_{\liang{r^{2^k}n}}+H_k\ltimes G_k)^{-1}\ltimes H_k & I_n
					\end{bmatrix}
					_{\liang{(r^{2^{k+1}}+1)n\times (r^{2^k}+1)n}},
					\qquad 
					\\
					\Phi'_k
					&
					=\begin{bmatrix}
						I_{r^{2^{k+1}}n} & A_k\ltimes G_k\ltimes (I_{\liang{r^{2^k}n}}+H_k\ltimes G_k)^{-1}
						\\
						0 & A_k^{\T}\ltimes (I_{\liang{r^{2^k}n}}+H_k\ltimes G_k)^{-1}
					\end{bmatrix}
					_{\liang{(r^{2^{k+1}}+1)n\times (r^{2^{k+1}}+r^{2^k})n}};
			\end{align*}
		\item \label{thm:sdare-doubling:item4}
			it holds for $k=0,1,2,\dots$ that 
			\begin{equation}\label{eq:doubling-ltimes-k}
				\Theta_k\ltimes \begin{bmatrix}
					I_{\liang{n}} \\ X
				\end{bmatrix} 
				=
				\Phi_k \ltimes \begin{bmatrix}
					I_{\liang{n}} \\ X
				\end{bmatrix}
				\ltimes \left((I_{\liang{rn}}+BB^{\T}\ltimes X)^{-1}A\right)^{\ltimes 2^k}.
			\end{equation}
	\end{enumerate}
\end{lemma}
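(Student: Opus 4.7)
The plan is to prove \cref{thm:sdare-doubling:item1,thm:sdare-doubling:item3,thm:sdare-doubling:item4} simultaneously by induction on $k$. The base case $k=0$ is already in the text immediately preceding the lemma: it verifies that $(\Theta,\Phi)$ is a $\ltimes$-SSF1 pair (via the symmetry of $BB^\T$ and $C^\T C$), exhibits a particular $(\Theta',\Phi')$ sitting in $\mathcal{N}(\Theta,\Phi)$, and records \cref{eq:sdare-matrix}, which is exactly \cref{eq:doubling-ltimes-k} at $k=0$; a quick comparison shows $(\Theta'_0,\Phi'_0)$ in the lemma coincides with that $(\Theta',\Phi')$.

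For the inductive step, the heart of the matter is the single identity $\Theta'_k\ltimes\Phi_k=\Phi'_k\ltimes\Theta_k$. Using the block rule for $\ltimes$ from \cref{ssec:left-semi-tensor-product}, this reduces to verifying two scalar swaps in $A_k,G_k,H_k$, both of which follow directly from \cref{eq:easy} in the form $(I+G_k\ltimes H_k)^{-1}\ltimes G_k=G_k\ltimes(I+H_k\ltimes G_k)^{-1}$ and its transpose. Together with the full row rank of $\begin{bmatrix}\Theta'_k&\Phi'_k\end{bmatrix}$, which is immediate from the $I_{r^{2^{k+1}}n}$ and $I_n$ blocks it contains, this establishes \cref{thm:sdare-doubling:item3} at step $k$. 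Expanding $\Theta_{k+1}=\Theta'_k\ltimes\Theta_k$ and $\Phi_{k+1}=\Phi'_k\ltimes\Phi_k$ block by block, and invoking \cref{eq:easy} once more to pass freely between $(I+H_kG_k)^{-1}\ltimes H_k$ and $H_k\ltimes(I+G_kH_k)^{-1}$ (and analogously for $G_k$), recovers exactly the SSF1 form with $A_{k+1},G_{k+1},H_{k+1}$ given by \cref{eq:sda:A,eq:sda:G,eq:sda:H}. Symmetry of $G_{k+1}$ and $H_{k+1}$ is inherited from $G_k,H_k$: the same swap identity shows $(I+G_kH_k)^{-1}G_k$ is symmetric, and the outer conjugations by $A_k,A_k^\T$ preserve symmetry. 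Hence $(\Theta_{k+1},\Phi_{k+1})$ is $\ltimes$-SSF1, giving \cref{thm:sdare-doubling:item1} at step $k+1$.

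For \cref{thm:sdare-doubling:item4} at step $k+1$, write $W=(I_{rn}+BB^\T\ltimes X)^{-1}\ltimes A$ and compute
\[
\Theta_{k+1}\ltimes\begin{bmatrix}I_n\\X\end{bmatrix}=\Theta'_k\ltimes\Theta_k\ltimes\begin{bmatrix}I_n\\X\end{bmatrix}=\Theta'_k\ltimes\Phi_k\ltimes\begin{bmatrix}I_n\\X\end{bmatrix}\ltimes W^{\ltimes 2^k}
\]
by the inductive hypothesis; then swap $\Theta'_k\ltimes\Phi_k=\Phi'_k\ltimes\Theta_k$, apply the inductive hypothesis once more, and absorb $W^{\ltimes 2^k}\ltimes W^{\ltimes 2^k}=W^{\ltimes 2^{k+1}}$ to conclude $\Phi_{k+1}\ltimes\begin{bmatrix}I_n\\X\end{bmatrix}\ltimes W^{\ltimes 2^{k+1}}$.

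The main obstacle I anticipate is pure bookkeeping. The block dimensions grow like $r^{2^k}$ at each step, every block $\ltimes$-product carries implicit Kronecker factors $\otimes I_{r^{2^k}}$, and the right-hand sides of \cref{eq:doubling} look asymmetric in $G$ versus $H$ unless one consistently uses \cref{eq:easy} to shuttle factors across the inverse. Once this dimensional accounting is pinned down, every algebraic move mirrors the classical DARE doubling construction, with \cref{eq:easy} playing the role of the standard matrix identity $(I+UV)^{-1}U=U(I+VU)^{-1}$ throughout.
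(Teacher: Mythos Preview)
Your proposal is correct and follows essentially the same approach as the paper: the paper dispatches \cref{thm:sdare-doubling:item1,thm:sdare-doubling:item3} by noting that the computation preceding the lemma (\cref{eq:Theta-Phi,eq:doubling-ltimes}) goes through verbatim with $A,BB^{\T},C^{\T}C$ replaced by $A_k,G_k,H_k$, and then proves \cref{thm:sdare-doubling:item4} by exactly the inductive chain $\Theta'_k\ltimes\Theta_k\to\Theta'_k\ltimes\Phi_k\to\Phi'_k\ltimes\Theta_k\to\Phi'_k\ltimes\Phi_k$ you wrote down. Your version is slightly more explicit about the symmetry of $G_{k+1},H_{k+1}$ and the full-row-rank check, but these are the same verifications the paper leaves implicit.
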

\begin{proof}
	\Cref{thm:sdare-doubling:item1,thm:sdare-doubling:item3} holds by the same discussion as \cref{eq:Theta-Phi,eq:doubling-ltimes}. 
	Now we prove \cref{thm:sdare-doubling:item4} by induction.
	The case $k=0$ holds by \cref{eq:sdare-matrix,eq:Theta-Phi}. 
	Suppose it holds for $k$ and consider $k+1$.
	By $\Theta'_k\ltimes \Phi_k = \Phi'_k\ltimes \Theta_k, \Theta_{k+1}=\Theta'_k\ltimes \Theta_k, \Phi_{k+1}=\Phi'_k\ltimes \Phi_k$,
	writing $A_X=(I_{\liang{rn}}+BB^{\T}\ltimes X)^{-1}A$,
we have
	\begin{align*}
		\Theta_{k+1}\ltimes \begin{bmatrix}
			I\\X
		\end{bmatrix}
		\!= \Theta'_k\ltimes \Theta_k \ltimes \begin{bmatrix}
			I \\ X
		\end{bmatrix}\!
		&=
		\Theta'_k\ltimes \Phi_k \ltimes \begin{bmatrix}
			I \\ X
		\end{bmatrix}
		\ltimes A_X^{\ltimes 2^k}
		\\&=
		\Phi'_k\ltimes \Theta_k \ltimes \begin{bmatrix}
			I \\ X
		\end{bmatrix}
		\ltimes A_X^{\ltimes 2^k}
		\\&=
		\Phi'_k\ltimes \Phi_k \ltimes \begin{bmatrix}
			I \\ X
		\end{bmatrix}
		\ltimes A_X^{\ltimes 2^{k+1}}
		\!=\Phi_{k+1}\ltimes \begin{bmatrix}
			I\\X
		\end{bmatrix}\ltimes A_X^{\ltimes 2^{k+1}},
	\end{align*}
	that is, the result holds for $k+1$. 
	Then \cref{thm:sdare-doubling:item4} is a direct consequence.
\end{proof}
For the case that $r=1$, \cref{thm:sdare-doubling} degenerates into the doubling method for DAREs (see, e.g., \cite{huangLL2018structurepreserving}), where $(\Theta_k, \Phi_k)$ are symplectic pairs in the first standard form. 

Then we prove that $H_0,H_1,H_2,\dots$ is the subsequence $X_1,X_2,X_4,\dots$ of the sequence generated by the fixed point iteration \cref{eq:fixed-point}.

\begin{lemma}\label{thm:doubling}
	For $k=0,1,2,\dots$, let 
	\begin{align*}
		U_{2^k}
		&=\begin{bmatrix}
			A^{\ltimes (2^k-1)}\ltimes B &
			(A^{\ltimes (2^k-2)}\ltimes B)\otimes I_r &
			\cdots &
			(A\ltimes B)\otimes I_{r^{2^k-2}} &
			B\otimes I_{r^{2^k-1}} 
		\end{bmatrix},
	\end{align*}
	and $V_{2^k}, T_{2^k}$ as in \cref{eq:noniter:X:UVT}.
Then it holds that 
\begin{subequations}\label{eq:ZHGA}
	\begin{align}
	A_k&=A^{\ltimes 2^k}-U_{2^k}(I+T_{2^k}^{\T}T_{2^k})^{-1}T_{2^k}^{\T}V_{2^k}
	,\label{eq:dsda:A} \\
	G_k&=U_{2^k}(I+T_{2^k}^{\T}T_{2^k})^{-1}U_{2^k}^{\T}
	, \label{eq:dsda:G}\\
	H_k &= V_{2^k}^{\T}(I+T_{2^k}T_{2^k}^{\T})^{-1}V_{2^k}
	,\label{eq:dsda:H}
\end{align}
\end{subequations}
and so $H_k=X_{2^k}$ as in \cref{eq:noniter:X:X}.
\end{lemma}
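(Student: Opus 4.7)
My plan is to prove all three formulas in \eqref{eq:ZHGA} by induction on $k$, and then to read off $H_k=X_{2^k}$ by comparing \eqref{eq:dsda:H} with \eqref{eq:noniter:X:X} from \cref{thm:fixedpoint}. The base case $k=0$ is immediate: with $2^0=1$ one has $V_1=C$, $T_1=0$, and $U_1=B$ (a single block $B\otimes I_{r^0}$), so \eqref{eq:dsda:A}--\eqref{eq:dsda:H} reduce to $A_0=A$, $G_0=BB^{\T}$, $H_0=C^{\T}C$, matching the initial data.

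For the inductive step, the key preparation is to derive block-recursion formulas expressing $V_{2t},U_{2t},T_{2t}$ in terms of their counterparts at size $t$. Using associativity of $\ltimes$ and the definitions in \cref{eq:noniter:X:UVT} and the statement, I expect to obtain
\[
	V_{2t}=\begin{bmatrix} V_t \\ V_t\ltimes A^{\ltimes t} \end{bmatrix},\qquad
	U_{2t}=\begin{bmatrix} A^{\ltimes t}\ltimes U_t & U_t\otimes I_{r^t} \end{bmatrix},\qquad
	T_{2t}=\begin{bmatrix} T_t & 0 \\ V_t\ltimes U_t & T_t\otimes I_{r^t} \end{bmatrix},
\]
the off-diagonal block in $T_{2t}$ arising because the Toeplitz entries of the lower-left factor precisely as $(C\ltimes A^{\ltimes(i-1)})\ltimes(A^{\ltimes(t-j)}\ltimes B\otimes I_{r^{j-1}})$. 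Verifying this block form is really bookkeeping on the Kronecker indices implicit in $\ltimes$, but it is the crucial bridge between the Toeplitz closed form and the doubling recursions.

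Granting the block formulas, I substitute \eqref{eq:ZHGA} at level $k$ with $t=2^k$ into the doubling recursions \eqref{eq:sda:A}--\eqref{eq:sda:H}. The inverses $(I+G_k\ltimes H_k)^{-1}$ and $(I+H_k\ltimes G_k)^{-1}$ then become
\[
	(I+U_t(I+T_t^{\T}T_t)^{-1}U_t^{\T}\ltimes V_t^{\T}(I+T_tT_t^{\T})^{-1}V_t)^{-1}
\]
and its companion, which I will collapse by two applications of the Sherman--Morrison--Woodbury identity \cref{eq:smwf} together with \cref{eq:easy}, just as in the inductive step of \cref{thm:fixedpoint}. After cancellation, the expressions for $A_{k+1},G_{k+1},H_{k+1}$ should rearrange into exactly $A^{\ltimes 2t}-U_{2t}(I+T_{2t}^{\T}T_{2t})^{-1}T_{2t}^{\T}V_{2t}$, $U_{2t}(I+T_{2t}^{\T}T_{2t})^{-1}U_{2t}^{\T}$, and $V_{2t}^{\T}(I+T_{2t}T_{2t}^{\T})^{-1}V_{2t}$ respectively, using the block-matrix inversion formula for $I+T_{2t}T_{2t}^{\T}$ applied to the Schur complement structure implied by the block form of $T_{2t}$.

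The hard part, and the only place where real work beyond bookkeeping lies, is matching the doubled Toeplitz block $I+T_{2t}T_{2t}^{\T}$ (together with $V_{2t}$ and $U_{2t}$) to the Woodbury expansion of $(I+G_k\ltimes H_k)^{-1}$: one must show that the off-diagonal coupling $V_t\ltimes U_t$ in $T_{2t}$ is exactly what is produced by the cross term $A^{\ltimes t}\ltimes U_t$ versus $V_t\ltimes A^{\ltimes t}$ after the $\ltimes$-Kronecker factors $\otimes I_{r^t}$ are accounted for. Once this identification is verified, \eqref{eq:ZHGA} at level $k+1$ follows, closing the induction, and the final identification $H_k=X_{2^k}$ is then immediate from \eqref{eq:noniter:X:X} with $t=2^k$.
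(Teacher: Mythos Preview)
Your proposal is correct and follows essentially the same approach as the paper: induction on $k$, the identical block decompositions $V_{2t}=\begin{bmatrix}V_t\\ V_t\ltimes A^{\ltimes t}\end{bmatrix}$, $U_{2t}=\begin{bmatrix}A^{\ltimes t}\ltimes U_t & U_t\otimes I_{r^t}\end{bmatrix}$, $T_{2t}=\begin{bmatrix}T_t&0\\ V_t\ltimes U_t & T_t\otimes I_{r^t}\end{bmatrix}$, and then Sherman--Morrison--Woodbury together with block $2\times2$ inversion via Schur complements to match the doubling recursions \eqref{eq:doubling}. The only cosmetic difference is orientation: the paper starts from the closed forms $V_{2^{k+1}}^{\T}(I+T_{2^{k+1}}T_{2^{k+1}}^{\T})^{-1}V_{2^{k+1}}$ etc.\ and simplifies down to $H_{k+1},G_{k+1},A_{k+1}$, whereas you propose to start from \eqref{eq:sda:A}--\eqref{eq:sda:H} and build up to the closed forms; the computations are the same either way.
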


\begin{proof}
	Induction will be used to obtain \cref{eq:ZHGA}.
	The case $k=0$ is obvious.
Now assume that \cref{eq:ZHGA} holds for $k$ and observe the case $k+1$.
For ease, we omit the subscript $\cdot_{2^k}$ for $U,V,T$.
Write $W=V\ltimes U$, and then
\[
	T_{2^{k+1}}=\begin{bmatrix}
		T & 0\\ W & T\otimes I_{r^{2^k}}
	\end{bmatrix},\quad
	U_{2^{k+1}}=\begin{bmatrix}
		A^{\ltimes 2^k}\ltimes U & U\otimes I_{r^{2^k}}
	\end{bmatrix},\quad
	V_{2^{k+1}}=\begin{bmatrix}
		V \\ V\ltimes A^{\ltimes 2^k}
	\end{bmatrix}
	.
\]
Write
$
	M=I+T^{\T}T,\; N=I+TT^{\T},\; K=M+W^{\T}\ltimes N^{-1}\ltimes W,\; L=N\otimes I_{r^{2^k}}+WM^{-1}W^{\T},
	$
and also
	\begin{subequations}\label{eq:thm:decoupled-form:pf:inv}
		\begin{align}
			M^{-1}&
			\;\;
			\clue{\cref{eq:smwf}}{=}I-T^{\T}(I+TT^{\T})^{-1}T=I-T^{\T}N^{-1}T,
			\label{eq:thm:decoupled-form:pf:Minv} 
			\\
			N^{-1}&
			\;\;
			\clue{\cref{eq:smwf}}{=}I-T(I+T^{\T}T)^{-1}T^{\T}=I-TM^{-1}T^{\T},
			\label{eq:thm:decoupled-form:pf:Ninv} 
			\\
			K^{-1}&
			\;\;\clue{\cref{eq:thm:decoupled-form:pf:Ninv}}{=}\;(M+W^{\T}W-W^{\T}\ltimes TM^{-1}T^{\T}\ltimes W)^{-1},
			\label{eq:thm:decoupled-form:pf:Kinv} 
			\\
			L^{-1}&
			\;\;\clue{\cref{eq:thm:decoupled-form:pf:Minv}}{=}\;(N\otimes I_{r^{2^k}}+WW^{\T}-WT^{\T}N^{-1}TW^{\T})^{-1}
			.
			\label{eq:thm:decoupled-form:pf:Linv} 
		\end{align}
	\end{subequations}
	Thus,
	$
		G_k=UM^{-1} U^{\T},
		\;
			H_k=V^{\T} N^{-1} V,
			\;
			A_k=A^{\ltimes 2^k}-UM^{-1}T^{\T} V,
			$
	and
	\begin{equation} \label{eq:thm:decoupled-form:pf:5}
		\begin{aligned}[b]
			(I+G_k\ltimes H_k)^{-1}
			&=( I+ U M^{-1} W^{\T}\ltimes  N^{-1} V)^{-1}
			\\&\clue{\cref{eq:smwf}}{=}I-UM^{-1} W^{\T}\ltimes N^{-1} ( I+ WM^{-1} W^{\T}\ltimes  N^{-1} )^{-1}\ltimes V
			=I-U M^{-1} W^{\T} L^{-1}\ltimes V
			,
		\end{aligned}
	\end{equation}
	Then, by \cref{eq:thm:decoupled-form:pf:5},
	\begin{equation}\label{eq:thm:decoupled-form:pf:6}
		\begin{aligned}[b]
			H_k\ltimes (I+G_k\ltimes H_k)^{-1}
			&=V^{\T}N^{-1}V\ltimes(I-U M^{-1} W^{\T} L^{-1}\ltimes V)
			\\&=V^{\T}N^{-1}\ltimes (I-W M^{-1} W^{\T} L^{-1} )\ltimes V
			=V^{\T}\ltimes L^{-1}\ltimes V.
		\end{aligned}
	\end{equation}
	Thus, 
	\begin{align*}
		(I+T_{2^{k+1}}T_{2^{k+1}}^{\T})^{-1}
		&=\left(I+\begin{bmatrix}
				T & 0\\ W & T\otimes I_{r^{2^k}}
				\end{bmatrix}\begin{bmatrix}
				T & 0\\ W & T\otimes I_{r^{2^k}}
		\end{bmatrix}^{\T}\right)^{-1}
		\\&= \begin{bmatrix}
			N & TW^{\T} \\ WT^{\T} & N\otimes I_{r^{2^k}}+WW^{\T}
		\end{bmatrix}^{-1}
		\\&\clue{\cref{eq:thm:decoupled-form:pf:Linv}}{=}\;\begin{bmatrix}
			I & -N^{-1}TW^{\T}\\ & I
		\end{bmatrix}
		\begin{bmatrix}
			N^{-1} & \\ & L^{-1}
		\end{bmatrix}
		\begin{bmatrix}
			I & \\ -WT^{\T}N^{-1} & I
		\end{bmatrix}
		.
	\end{align*}
	Note that $	
		\begin{bmatrix}
				I & \\ -WT^{\T}N^{-1} &I
			\end{bmatrix}V_{2^{k+1}}
			=\begin{bmatrix}
				V \\ V\ltimes A^{\ltimes 2^k}-WT^{\T}N^{-1}V
			\end{bmatrix}
			=\begin{bmatrix}
				V \\ V\ltimes A_k
			\end{bmatrix}
			$.  Then
	\begin{align*}
		V_{2^{k+1}}^{\T} (I+T_{2^{k+1}}T_{2^{k+1}}^{\T})^{-1} V_{2^{k+1}}
		&=\begin{bmatrix}
			V \\ V\ltimes A_k
			\end{bmatrix}^{\T}\begin{bmatrix}
			N^{-1} & \\ & L^{-1}
			\end{bmatrix}\begin{bmatrix}
			V \\ V\ltimes A_k
		\end{bmatrix}
		\\&\clue{\cref{eq:thm:decoupled-form:pf:6}}{=} H_k + A_k^{\T}\ltimes H_k\ltimes (I_n + G_k\ltimes  H_k)^{-1}\ltimes A_k
		\clue{\cref{eq:sda:H}}{=}\; H_{k+1},
	\end{align*}
	which implies \cref{eq:dsda:H} holds for $k+1$.
	On the other hand,
	similarly, 
	we have
	\begin{gather*}
			(I+G_k\ltimes H_k)^{-1}
			=I-U K^{-1} W^{\T}\ltimes  N^{-1} V
			,
			\\
			(I+G_k\ltimes H_k)^{-1}\ltimes G_k
			=UK^{-1}U^{\T}
			,
			\\
		(I+T_{2^{k+1}}^{\T}T_{2^{k+1}})^{-1}
		=\begin{bmatrix}
			I & \\ -M^{-1}T^{\T}\ltimes W& I
		\end{bmatrix}
		\begin{bmatrix}
			K^{-1} & \\ &\!\!\!\!\! M^{-1}\otimes I_{r^{2^k}}
		\end{bmatrix}
		\begin{bmatrix}
			I & -W^{\T}\ltimes TM^{-1}\\  & I
		\end{bmatrix}
		,
		\\
		U_{2^{k+1}}\begin{bmatrix}
			I & \\ -M^{-1}T^{\T}\ltimes W& I
		\end{bmatrix}
		=\begin{bmatrix}
			A_k\ltimes U & U\otimes I_{r^{2^k}}
		\end{bmatrix}
		,
		\\
		U_{2^{k+1}} (I+T_{2^{k+1}}^{\T}T_{2^{k+1}})^{-1} U_{2^{k+1}}^{\T}
		= G_{k+1},
	\end{gather*}
	which implies \cref{eq:dsda:G} holds for $k+1$.
	Similarly, 
	\begin{align*}
		\MoveEqLeft[0] A^{\ltimes 2^{k+1}}- U_{2^{k+1}}(I+T_{2^{k+1}}^{\T}T_{2^{k+1}})^{-1}T_{2^{k+1}}^{\T} V_{2^{k+1}}
		\\&=
			A^{\ltimes 2^{k+1}}\!- \!\begin{bmatrix}
				A_k\ltimes U &\!\!\!\! U\otimes I_{r^{2^k}}
				\end{bmatrix}\!\!\!\begin{bmatrix}
				K^{-1} & \\ &\!\!\!\!\!\!\!\! M^{-1}\otimes I_{r^{2^k}}
				\end{bmatrix}
				\!\!\!
				\begin{bmatrix}
				I &\!\!\!\! -W^{\T}\ltimes TM^{-1}\\  & I
				\end{bmatrix}\!\!\!\begin{bmatrix}
				T^{\T} &\!\!\!\!\! W^{\T} \\ &\!\!\!\!\! T^{\T}\otimes I_{r^{2^k}}
				\end{bmatrix}\!\!\!\begin{bmatrix}
				V \\ V\ltimes A^{\ltimes 2^k}
			\end{bmatrix}
		\\&= 
		A^{\ltimes 2^{k+1}}- A_k\ltimes UK^{-1}(T^{\T}V+W^{\T}\ltimes N^{-1}V\ltimes A^{\ltimes 2^k})- UM^{-1}T^{\T}V\ltimes A^{\ltimes 2^k}
		\\&= 
		A_k\ltimes [A^{\ltimes 2^k}- UK^{-1}T^{\T}V-UK^{-1}W^{\T}\ltimes N^{-1}V\ltimes A^{\ltimes 2^k}]
		\\&= 
		A_k\ltimes (I-UK^{-1}W^{\T}\ltimes N^{-1}V)\ltimes[A^{\ltimes 2^k}- (I-UK^{-1}W^{\T}\ltimes N^{-1}V)^{-1}UK^{-1}T^{\T}V]
		\\&= A_k\ltimes (I+G_k\ltimes H_k)^{-1}\ltimes A_k
		\clue{\cref{eq:sda:A}}{=}\; A_{k+1},
	\end{align*}
	which implies \cref{eq:dsda:A} holds for $k+1$.
\end{proof}

For the case $r=1$, \cref{eq:doubling-ltimes-k} coincides with the decoupled formulae of the dSDA for DAREs introduced in \cite{guoCLL2020decoupled}. 
\Cref{thm:convergence-of-doubling-iteration-for-sdare} is a direct consequence of \cref{thm:convergence-of-fixed-point-iteration-for-sdare,thm:doubling}.
\begin{theorem}[Convergence of doubling iteration for SDAREs]\label{thm:convergence-of-doubling-iteration-for-sdare}
	The sequence $\set{H_k}$  generated by the doubling iteration \cref{eq:doubling} with $A_0=A,G_0=BB^{\T},H_0=C^{\T}C$ is either finite or monotonically increasing, and converges to the unique positive semi-definite  stabilizing solution $X_\star$ of the SDARE~\cref{eq:sdare-equivalent1} R-quadratically, namely
			\liang{
			\begin{equation}\label{eq:R-quadratically-convergence}
				H_k\succeq X_\star -(\op S_{F_\star}^*)^{2^k}\left( X_\star [ I_{\liang{n}} -Y X_\star]^{-1} \right),
			\end{equation}
			where $Y$ and $(\op S_{F_\star}^*)^{2^k}$ are as in \cref{eq:R-linearly-convergence},
			which implies
				$\lim\limits_{t\to\infty}\left(\frac{\N{H_k-X_\star}}{\N{X_\star}}\right)^{1/2^k}\!\!\!\!\le
				\rho(\op S_{F_\star})<1.
				$
			}
\end{theorem}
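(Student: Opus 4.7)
The plan is to observe that this theorem is essentially a corollary of the two results established immediately beforehand. By Lemma~\ref{thm:doubling}, the sequence $\{H_k\}$ produced by the doubling iteration~\cref{eq:doubling} satisfies $H_k = X_{2^k}$, where $\{X_t\}$ is the fixed point iteration sequence from~\cref{eq:fixed-point}. Thus the doubling iteration is nothing but the fixed point iteration sampled at indices $t = 1, 2, 4, \dots, 2^k, \dots$, and every qualitative property of $\{X_t\}$ transfers verbatim to $\{H_k\}$.

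First, I would establish the monotonicity dichotomy by invoking part~\ref{lm:fixed-point-convergence} of Theorem~\ref{thm:convergence-of-fixed-point-iteration-for-sdare}. Since $X_t \preceq X_{t+1}$ for all $t$, the subsequence $H_k = X_{2^k}$ is automatically nondecreasing. If $H_{k_0} = H_{k_0+1}$ for some $k_0$, then $X_{2^{k_0}} = X_{2^{k_0+1}}$ together with the sandwich $X_{2^{k_0}} \preceq X_{2^{k_0}+1} \preceq X_{2^{k_0+1}}$ forces $X_{2^{k_0}} = X_{2^{k_0}+1}$, and the fixed point theorem immediately yields $X_{2^{k_0}} = X_\star$; otherwise the sequence is strictly increasing, which is the stated dichotomy.

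Second, for the R-quadratic bound~\cref{eq:R-quadratically-convergence}, I would simply specialize the R-linear estimate~\cref{eq:R-linearly-convergence} at $t = 2^k$: with the same $Y \succ 0$ supplied by part~\ref{lm:convergence-rate} of Theorem~\ref{thm:convergence-of-fixed-point-iteration-for-sdare} (which depends only on $X_\star$, $A_\star$, $B_\star$, not on the iteration index), we obtain
\[
    H_k = X_{2^k} \succeq X_\star - (\op S_{F_\star}^*)^{2^k}\!\left( X_\star [I_n - Y X_\star]^{-1} \right),
\]
which is precisely~\cref{eq:R-quadratically-convergence}. Applying Gel'fand's theorem to $(\op S_{F_\star}^*)^{2^k}$ in the same manner as in the proof of part~\ref{lm:convergence-rate} of Theorem~\ref{thm:convergence-of-fixed-point-iteration-for-sdare} yields
\[
    \lim_{k\to\infty}\left( \frac{\N{H_k - X_\star}}{\N{X_\star}} \right)^{1/2^k} \le \rho(\op S_{F_\star}) < 1,
\]
establishing the R-quadratic rate.

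Since the entire argument reduces to quoting $H_k = X_{2^k}$ from Lemma~\ref{thm:doubling} and substituting $t = 2^k$ into results already proved, there is no real obstacle. The only point deserving care is to verify that the constant $Y$ from~\cref{eq:R-linearly-convergence} is a single matrix independent of $t$, so that its role is preserved under the substitution $t \to 2^k$; this is clear from its construction via the Lyapunov inequality involving only $X_\star$, $A_\star$, and $B_\star$.
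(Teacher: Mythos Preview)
Your proposal is correct and matches the paper's own treatment: the paper states explicitly that \cref{thm:convergence-of-doubling-iteration-for-sdare} is a direct consequence of \cref{thm:convergence-of-fixed-point-iteration-for-sdare,thm:doubling}, and gives no further proof. Your write-up simply spells out that consequence, using $H_k=X_{2^k}$ and substituting $t=2^k$ into the fixed-point results, which is exactly the intended argument.
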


Based on the doubling iteration \cref{eq:doubling},  one can solve the SDARE \cref{eq:sdare-equivalent1} directly by doubling iteration method, or equivalently an analogous SDA method as that for DAREs \cite{anderson1978secondorder,huangLL2018structurepreserving}.

\section{SCARE}\label{sec:stochastic-continuous-time-algebraic-riccati-equations:scare}

Consider the SCARE \cref{eq:scare}
where $A_i, Q\in \mathbb{R}^{n\times n}$, $B_i\in \mathbb{R}^{n\times m}$, $L\in \liang{\mathbb{R}^{n\times m}}$ and $R\in \mathbb{R}^{m\times m}$ with 
$\begin{bmatrix}
	Q& L \\ L^{\T} & R
\end{bmatrix}\succeq 0$.
It is easy to see that $X$ is a solution if and only if $X^{\T}$ is a solution.
In control theory, usually only symmetric solutions to \cref{eq:scare} are needed.
Hence in the paper, we only consider the symmetric solutions.

The SCARE \cref{eq:scare} arises from the stochastic time-invariant  control system in continue-time subject to multiplicative white noise, whose dynamics is described as below:
\begin{equation}\label{eq:scare-system:scare}
	\begin{aligned}
		\diff x(t) &= A_{\liang{0}}x(t)\diff t + B_{\liang{0}}u(t)\diff t+\sum_{i=1}^{\liang{\liang{r-1}}}(A_ix(t)+B_iu(t))\diff w_{i}(t),
		\\
		z(t) &= \liang{C_z}x(t) + \liang{D_z}u(t),
	\end{aligned}
\end{equation}
in which $x(t),u(t)$ and $z(t)$ are state, input, measurement, respectively,
and $w(t)=\begin{bmatrix}
	w_{1}(t) &\cdots &w_{\liang{r-1}}(t)
\end{bmatrix}^{\T}$ is a  standard Wiener process  satisfying 
that each $w_i(t)$ is a standard Brownian motion and the $\sigma$-algebras $\sigma\left(w_i(t), t\in [t_0,\infty)\right), i=1,\dots,\liang{r-1}$ are independent \cite{draganMS2013mathematical}.
Considering the cost functional with respect to the control $u(t)$ with the given initial $x_0$: 
\begin{equation}\label{eq:cost-function:scare}
	J(t_0,x_0;u) =\E{\int_{t_0}^{\infty} 
		\begin{bmatrix}
			x_{t_0,x_0;u}(t) \\ u(t)
		\end{bmatrix}^{\T}
		\begin{bmatrix}
			Q & L \\ L^{\T} & R
		\end{bmatrix}
		\begin{bmatrix}
			x_{t_0,x_0;u}(t) \\ u(t)
	\end{bmatrix}\diff t},
\end{equation}
where $x_{t_0,x_0;u}(t)$ is the solution of the system \cref{eq:scare-system:scare} corresponding to the input $u(t)$ and having the initial $x_{t_0,x_0;u}(t_0)=x_0$, one goal in stochastic control is to minimize the cost functional \cref{eq:cost-function:scare} and compute an optimal control. Such an optimization problem is also called the first linear-quadratic optimization problem 
\liang{\cite[Section~6.2]{draganMS2013mathematical}}.    

Assume the following conditions hold throughout this section:
\begin{enumerate}[(\mbox{C}1)]
	\item \label{item:item1:scare} $R\succ 0$;
	\item \label{item:item2:scare} the pair $(\set{A_i}_{i=0}^{\liang{r-1}}, \set{B_i}_{i=0}^{\liang{r-1}})$ is stabilizable, i.e., there exists $F\in \mathbb{R}^{m\times n}$ such that the linear differential equation   
		\begin{equation*}\label{eq:continus-Lyap:scare}
			\frac{\diff}{\diff t}S(t)= \op L_F S(t):=(A_{\liang{0}}+B_{\liang{0}}F)S+S(A_{\liang{0}}+B_{\liang{0}}F)^{\T}+\sum_{i=1}^{\liang{r-1}}(A_i+B_iF)S(A_i+B_iF)^{\T}
		\end{equation*}
		is exponentially stable, or equivalently, the evolution operator $\ee^{\op L_F(t-t_0)}$ is exponentially stable with $\ee^{\op L_F t}=\sum_{k=0}^{\infty}\frac{\op L_F^kt^k}{k!}$; and  
	\item \label{item:item3:scare} the pair $(\set{A_i}_{i=0}^{\liang{r-1}}, C)$ is detectable 
		with $C^{\T}C=Q-LR^{-1}L^{\T}$, or equivalently, $(\set{A_i^{\T}}_{i=0}^{\liang{r-1}}, \set{C_i^{\T}}_{i=0}^{\liang{r-1}})$ is stabilizable with $C_0=C$ and $C_i=0$ for $i=1,\cdots, \liang{r-1}$. 
\end{enumerate}
It is known that if the assumption above holds,
then \cref{eq:scare} has a unique 
positive semi-definite stabilizing solution $X_{\star}$, see, e.g., \liang{\cite[Theorem~5.6.15]{draganMS2013mathematical}}.
Here, $X$ is a stabilizing solution if the system $(A_{\liang{0}}+B_{\liang{0}}F_X, A_1+B_1F_X, \cdots, A_{\liang{r-1}}+B_{\liang{r-1}}F_X)$ is stable  with 
\begin{equation}\label{eq:F:scare}
	F_X=-(\sum_{i=1}^{\liang{r-1}} B_i^{\T}XB_i+R)^{-1}(B_{\liang{0}}^{\T}X+\sum_{i=1}^{\liang{r-1}} B_i^{\T}XA_i+L^{\T}), 
\end{equation}
or equivalently, $\op L_{F_\star}$ is exponentially stable with the associated $F_\star=F_{X_\star}$ taking the feedback control specified in \cref{eq:F:scare} with $X=X_\star$. 
In fact,  $X_{\star}$ is  a stabilizing solution if and only if the zero equilibrium of the closed-loop system 
\begin{equation}\label{eq:close-loop:scare}
	\diff x(t) = (A_{\liang{0}} + B_{\liang{0}}F_{\star})x(t)\diff t+\sum_{i=1}^{\liang{r-1}}(A_i+B_iF_{\star})x(t)\diff w_{i}(t).
\end{equation}
is strongly exponentially stable in the mean square \liang{\cite[Chapter~5]{draganMS2013mathematical}}. 
Furthermore, the cost functional \cref{eq:cost-function:scare} has an  optimal control $u(t)=F_{\star}x_{t_0,x_0}(t)$ where $x_{t_0,x_0}(t)$ is the solution to the corresponding closed-loop system \cref{eq:close-loop:scare}.


\subsection{Standard form and symplectic structure}\label{ssec:the-standard-form:scare}
As we have done for SDAREs, first we make an equivalent reformulation for \cref{eq:scare} for the sake of simplicity. 

Write $\wtd A=\begin{bmatrix}
	A_1 \\ \vdots\\ A_{\liang{r-1}}
	\end{bmatrix}, \wtd B=\begin{bmatrix}
	B_1 \\ \vdots \\ B_{\liang{r-1}}
\end{bmatrix}$, and then \cref{eq:scare} will be rewritten as 
\begin{multline*}
	A_{\liang{0}}^{\T}X+XA_{\liang{0}} + \wtd A^{\T}(I\otimes X)\wtd A+Q-(XB_{\liang{0}}+\wtd A^{\T}(I\otimes X)\wtd B+L)
	(\wtd B^{\T}(I\otimes X)\wtd B+R)^{-1}(B_{\liang{0}}^{\T}X+\wtd B^{\T}(I\otimes X)\wtd A+L^{\T})=0.
\end{multline*}
Let $\Pi$ be the permutation satisfying $\Pi^{\T}(X\otimes I_{\liang{r-1}}) \Pi = I_{\liang{r-1}}\otimes X$,
and 
write $\what A=\Pi(\wtd A-\wtd BR^{-1}L^{\T}), \what B=\Pi\wtd BR^{-1/2}$.
\liang{Also write $A = A_{\liang{0}}-B_{\liang{0}}R^{-1}L^{\T}, B=B_{\liang{0}}R^{-1/2}$}. 
Noticing $C^{\T}C=Q -LR^{-1}L^{\T}$,
after some calculations \cref{eq:scare} is reformulated in the standard form of SCARE
\begin{equation}\label{eq:scare-standard-form:scare}
	\begin{multlined}[t]
		A^{\T}X+XA + C^{\T}C
		+ \what A^{\T}\ltimes X\ltimes \what A 
		-(XB+\what A^{\T}\ltimes X\ltimes \what B)(\what B^{\T}\ltimes X\ltimes \what B+I)^{-1}(B^{\T}X+\what B^{\T}\ltimes X\ltimes \what A) 
		=0
		,
	\end{multlined}
\end{equation}
where 
$A\in \mathbb{R}^{n\times n}$, $B\in \mathbb{R}^{n\times m}$, $\what A\in \mathbb{R}^{\liang{(r-1)}n\times n}$,  $\what B\in \mathbb{R}^{\liang{(r-1)}n\times m}$. 
Also the feedback control $F_X$ and the closed-loop matrix are reformulated as 
\begin{align*}
	F_X
	&=\liang{-R^{-1}L^{\T}+R^{-1/2}\what F_X},
	\\
	\begin{bmatrix}
		A_{\liang{0}}+B_{\liang{0}}F_X
		\\
		\wtd A+\wtd BF_X
	\end{bmatrix}
	&=
	\begin{bmatrix}
		A
		\\
		\Pi^{\T}\what A
	\end{bmatrix}
	+
	\begin{bmatrix}
		B
		\\
		\Pi^{\T}\what B
	\end{bmatrix}\liang{\what F_X}
	,
\end{align*}
\liang{
where 
$\what F_X=-(\what B^{\T}\ltimes X \ltimes \what B+I)^{-1}(XB + \what A^{\T}\ltimes X \ltimes  \what B)^{\T}$ is the feedback control of the standard form \cref{eq:scare-standard-form:scare}.
}
Then \cref{eq:scare-standard-form:scare} can be rewritten as
\begin{align*}
	0&=C^{\T}C+A^{\T}X+XA + \what A^{\T} \ltimes X\ltimes  \what A+
	(XB + \what A^{\T} \ltimes X \ltimes \what B)\liang{\what F_X}
	\\&=
	\begin{bmatrix}
		C^{\T}C &\! A^{\T}
	\end{bmatrix}\!
	\begin{bmatrix}
		I_n \\ X
	\end{bmatrix}
	+\begin{bmatrix}
		I_n &\! \what A^{\T}
		\end{bmatrix}\! \begin{bmatrix}
		X & \\ &\! X\otimes I_{\liang{r-1}}
		\end{bmatrix}\!  \begin{bmatrix}
		A \\ \what A
	\end{bmatrix}
	+\begin{bmatrix}
		I_n &\! \what A^{\T}
		\end{bmatrix}\! \begin{bmatrix}
		X & \\ &\! X\otimes I_{\liang{r-1}}
		\end{bmatrix}\!   \begin{bmatrix}
		B \\ \what B
	\end{bmatrix}\liang{\what F_X}
	.
\end{align*}
Let $\wtd \Pi$ be the permutation satisfying $\begin{bmatrix}
	X & \\ & X \otimes I_{\liang{r-1}}
\end{bmatrix} = \wtd \Pi^{\T}(X\otimes I_{\liang{r}})\wtd \Pi$,
and write 
$A_F=A+B\liang{\what F_X}, \what A_F=\what A+\what B\liang{\what F_X}$.
Then \cref{eq:scare-standard-form:scare} becomes
\begin{equation}\label{eq:scare-standard-form-part1:scare}
	\begin{aligned}[t]
		\begin{bmatrix}
			C^{\T}C & A^{\T}
		\end{bmatrix}
		\ltimes
		\begin{bmatrix}
			I_n \\ X
		\end{bmatrix}
		&=-\begin{bmatrix}
			I_n & \what A^{\T}
			\end{bmatrix}\wtd \Pi^{\T}\ltimes X \ltimes \wtd \Pi \begin{bmatrix}
			A_F \\ \what A_F
		\end{bmatrix}
		=
		-\begin{bmatrix}
			0_{n\times \liang{r}n} & I_n & \what A^{\T}
			\end{bmatrix} \begin{bmatrix}
			\wtd \Pi^{\T}& \\ & \wtd \Pi^{\T}
			\end{bmatrix} \ltimes \begin{bmatrix}
			I_n \\ X
			\end{bmatrix}\ltimes \wtd \Pi \begin{bmatrix}
			A_F \\ \what A_F
		\end{bmatrix}
		.
	\end{aligned}
\end{equation}
Note that \cref{eq:scare} is equivalent to \cref{eq:scare-standard-form-part1:scare} and 
\begin{equation}\label{eq:scare-standard-form-part2:initial:scare}
	\begin{bmatrix}
		A_F \\ \what A_F
		\end{bmatrix}=\begin{bmatrix}
		A\\ \what A
		\end{bmatrix}-\begin{bmatrix}
		B\\\what B
	\end{bmatrix}
	(\what B^{\T}\ltimes X \ltimes \what B+I)^{-1}(B^{\T}X + \what B^{\T}\ltimes X\ltimes \what A)
	.
\end{equation}
We can somehow treat \cref{eq:scare-standard-form-part1:scare} as an invariant subspace form,
which urges us to transform \cref{eq:scare-standard-form-part2:initial:scare} into that kind.

By left-multiplying the nonsingular matrix
\begin{align*}
	\begin{bmatrix}
		I_n &  B\what B^{\T}\ltimes X\\
		&I_{\liang{(r-1)}n}+\what B\what B^{\T}\ltimes X \\
	\end{bmatrix}
	&=\begin{bmatrix}
		I_n        &0&0& B\what B^{\T} \\
		0 & I_{\liang{(r-1)}n}&0& \what B\what B^{\T} \\
		\end{bmatrix}\begin{bmatrix}
		I_n \\
		& I_{\liang{(r-1)}n}\\
		X \\
		& X\otimes I_{\liang{r-1}}\\
	\end{bmatrix}
	\\&=
	\begin{bmatrix}
		I_n        &0&0& B\what B^{\T} \\
		0 & I_{\liang{(r-1)}n}&0& \what B\what B^{\T} \\
		\end{bmatrix}\begin{bmatrix}
		\wtd \Pi^{\T} & \\ & \wtd \Pi^{\T}
		\end{bmatrix} \ltimes \begin{bmatrix}
		I\\ X\\
	\end{bmatrix}\ltimes \wtd \Pi
\end{align*}
on both sides,
\cref{eq:scare-standard-form-part2:initial:scare} is equivalent to
\begin{equation}\label{eq:scare-standard-form-part2:scare}
	\begin{aligned}[t]
		\MoveEqLeft[0]	\begin{bmatrix}
			I_n        &0&0& B\what B^{\T} \\
			0 & I_{\liang{(r-1)}n}&0& \what B\what B^{\T} \\
			\end{bmatrix} \begin{bmatrix}
			\wtd \Pi^{\T} & \\ & \wtd \Pi^{\T}
			\end{bmatrix}\ltimes \begin{bmatrix}
			I\\ X\\
			\end{bmatrix} \ltimes \wtd \Pi \begin{bmatrix}
			A_F \\ \what A_F
		\end{bmatrix}
		\\&=
		\begin{multlined}[t]
			\begin{bmatrix}
				I_n &  B\what B^{\T}\ltimes X\\
				&I_{\liang{(r-1)}n}+\what B\what B^{\T}\ltimes X \\
				\end{bmatrix}\begin{bmatrix}
				A\\ \what A
				\end{bmatrix}-\begin{bmatrix}
				I_n &  B\what B^{\T}\ltimes X\\
				&I_{\liang{(r-1)}n}+\what B\what B^{\T}\ltimes X \\
				\end{bmatrix}\begin{bmatrix}
				B\\
				\what B \\
			\end{bmatrix}
			(\what B^{\T}\ltimes X \ltimes \what B+I)^{-1}(B^{\T}X + \what B^{\T}\ltimes X\ltimes \what A)
		\end{multlined}
		\\&=\begin{bmatrix}
			A -BB^{\T}X \\
			\what A -\what BB^{\T}X \\
		\end{bmatrix}
		=\begin{bmatrix}
			A &-BB^{\T} \\
			\what A &-\what BB^{\T} \\
			\end{bmatrix}\begin{bmatrix}
			I \\ X
		\end{bmatrix}
		.
	\end{aligned}
\end{equation}
Combining \cref{eq:scare-standard-form-part1:scare,eq:scare-standard-form-part2:scare}, now \cref{eq:scare} is equivalent to
\begin{equation*}\label{eq:scare-standard-form-matrix-ltimes:scare}
	\mathcal{A}\ltimes \begin{bmatrix}
		I\\ X
	\end{bmatrix}
	= \mathcal{B} \ltimes \begin{bmatrix}
		I\\ X\\
		\end{bmatrix}\ltimes \left(\wtd \Pi\begin{bmatrix}
			A_F \\ \what A_F
	\end{bmatrix}\right)
	,
\end{equation*}
where 
\[
	\mathcal{A}=\begin{bmatrix}
		C^{\T}C & A^{\T} \\
		A &-BB^{\T} \\
		\what A &-\what BB^{\T} \\
	\end{bmatrix},\quad
	\mathcal{B} = \begin{bmatrix}
		0 & 0 & -I_n & -\what A^{\T} \\
		I_n        &0&0& B\what B^{\T} \\
		0 & I_{\liang{(r-1)}n}&0& \what B\what B^{\T} \\
		\end{bmatrix}\begin{bmatrix}
		\wtd \Pi^{\T} & \\ & \wtd \Pi^{\T}
	\end{bmatrix},
\]
which shows that the solution to the SCARE is equivalent to an invariant subspace $\range\left( \begin{bmatrix}
		I \\ X
\end{bmatrix} \right)$ of the pair $( \mathcal{A},\mathcal{B} )$ with respect to the left semi-tensor product.

As continuous-time algebraic Riccati equations can be transformed to discrete-time ones by M\"obius transformation and then symplectic systems are attained,
stochastic continuous-time algebraic Riccati equations can also be transformed to stochastic discrete-time ones, which is clarified in the following.

For the M\"obius transformation, it seems that we need to consider the transformation $(\mathcal{A},\mathcal{B})\mapsto (\mathcal{A}+\gamma \mathcal{B}, \mathcal{A}-\gamma \mathcal{B})$. 
However, $\mathcal{A}, \mathcal{B}$ are not of the same size so they cannot be added directly.
Hence instead we check its equivalent effect on the invariant subspace $\range\left( \begin{bmatrix}
		I\\ X
\end{bmatrix} \right)$.
On the other hand, since in the system the part related to $\what A,\what B$ is somehow of the discrete-time style, the shifts in the M\"obius transformation are merely needed in the part related to $A,B$. 
Regarding both,
the transformation $(\mathcal{A},\mathcal{B})\mapsto (\mathcal{A}+\gamma \mathcal{B}\restrict_{\mathcal{A}}, \mathcal{A}\restrict_{\mathcal{B}}-\gamma \mathcal{B})$ is considered, where
\[
	\mathcal{B}\restrict_{\mathcal{A}}=	\begin{bmatrix}
		0    & -I_n  \\
		I_n  & 0    \\
		0    & 0    \\
	\end{bmatrix}, \qquad
	\mathcal{A}\restrict_{\mathcal{B}}= \begin{bmatrix}
		C^{\T}C & 0 & A^{\T} & 0 \\
		A        &0&-BB^{\T}& 0 \\
		\what A        &0&-\what BB^{\T}& 0 \\
		\end{bmatrix} \begin{bmatrix}
		\wtd \Pi^{\T} & \\ & \wtd \Pi^{\T}
	\end{bmatrix}
	.
\]
Note that 
\begin{align*}
	\mathcal{B}\ltimes\begin{bmatrix}
		I\\X\\
		\end{bmatrix}\ltimes \left(\wtd \Pi\begin{bmatrix}
			A_F\\\what A_F
	\end{bmatrix}\right)
	&=\mathcal{A}\ltimes \begin{bmatrix}
		I\\X
	\end{bmatrix},
	\\
	\mathcal{B}\ltimes\begin{bmatrix}
		I\\X\\
		\end{bmatrix}\ltimes \left(\wtd \Pi\begin{bmatrix}
			I_n \\ 0\\
	\end{bmatrix}\right)
	&=
	\begin{bmatrix}
		0 & 0 & -I_n & -\what A^{\T} \\
		I_n        &0&0& B\what B^{\T} \\
		0 & I_{\liang{(r-1)}n}&0& \what B\what B^{\T} \\
		\end{bmatrix}\begin{bmatrix}
		I\\ 0\\X\\0\\
	\end{bmatrix}
	=\mathcal{B}\restrict_{\mathcal{A}}\ltimes \begin{bmatrix}
		I\\X
	\end{bmatrix},
	\\
	\mathcal{A}\restrict_{\mathcal{B}}\ltimes\begin{bmatrix}
		I\\X\\
		\end{bmatrix}\ltimes \left(\wtd \Pi\begin{bmatrix}
			I_n\\ *
	\end{bmatrix}\right)
	&=
	\begin{bmatrix}
		C^{\T}C & 0 & A^{\T} & 0 \\
		A        &0&-BB^{\T}& 0 \\
		\what A        &0&-\what BB^{\T}& 0 \\
		\end{bmatrix}\begin{bmatrix}
		I\\ *\\X\\ X\ltimes *\\
	\end{bmatrix}
	=\mathcal{A}\ltimes \begin{bmatrix}
		I\\X
	\end{bmatrix}
	.
\end{align*}
Hence
\begin{equation}\label{eq:ML-gamma:scare}
	\begin{aligned}[b]
		(\mathcal{A}+\gamma \mathcal{B}\restrict_{\mathcal{A}})\ltimes \begin{bmatrix}
			I \\ X
		\end{bmatrix}
		&= \mathcal{B}\ltimes \begin{bmatrix}
			I\\X
			\end{bmatrix}\ltimes \left(\wtd \Pi \begin{bmatrix}
				A_F+\gamma I\\ \what A_F
		\end{bmatrix}\right)
		\\&=\mathcal{B}\ltimes\begin{bmatrix}
			I\\X\\
			\end{bmatrix}\ltimes \wtd \Pi\left(\begin{bmatrix}
				A_F\\ \what A_F
				\end{bmatrix}(A_F+\gamma I) -\gamma \begin{bmatrix}
				A_F+\gamma I \\ 2\what A_F
		\end{bmatrix}\right)(A_F-\gamma I)^{-1}
		\\&=\left(\mathcal{A}\ltimes \begin{bmatrix}
				I\\X\\
				\end{bmatrix}(A_F+\gamma I) -\gamma \mathcal{B}\ltimes\begin{bmatrix}
				I\\X\\
				\end{bmatrix}\ltimes \wtd \Pi\begin{bmatrix}
				A_F+\gamma I \\ 2\what A_F
		\end{bmatrix}\right)(A_F-\gamma I)^{-1}
		\\&=( \mathcal{A}\restrict_{\mathcal{B}} -\gamma \mathcal{B})\ltimes\begin{bmatrix}
			I\\X\\
			\end{bmatrix}\ltimes \wtd \Pi\begin{bmatrix}
			A_F+\gamma I \\ 2\what A_F
		\end{bmatrix}(A_F-\gamma I)^{-1}
		\\&=( \mathcal{A}\restrict_{\mathcal{B}} -\gamma \mathcal{B})
		\begin{bmatrix}
			Q & \\ & Q
		\end{bmatrix}
		\begin{bmatrix}
			Q^{-1} & \\ & Q^{-1}
		\end{bmatrix}
		\ltimes\begin{bmatrix}
			I\\X\\
			\end{bmatrix}\ltimes \wtd \Pi\begin{bmatrix}
			A_F+\gamma I \\ 2\what A_F
		\end{bmatrix}(A_F-\gamma I)^{-1}
		\\&=( \mathcal{A}\restrict_{\mathcal{B}} -\gamma \mathcal{B})
		\begin{bmatrix}
			Q & \\ & Q
		\end{bmatrix}
		\ltimes\begin{bmatrix}
			I\\X\\
			\end{bmatrix}\ltimes \left(\wtd \Pi\begin{bmatrix}
				A_F+\gamma I \\ \sqrt{2\gamma}\what A_F
		\end{bmatrix}(A_F-\gamma I)^{-1}\right)
		,
	\end{aligned}
\end{equation}
where $Q:=\wtd \Pi\begin{bmatrix}
	I_n \\ & \sqrt{\frac{2}{\gamma}}I_{\liang{(r-1)}n}
	\end{bmatrix}\wtd \Pi^{\T}$ is nonsingular and $Q^{-1}=\wtd \Pi\begin{bmatrix}
	I_n \\ & \sqrt{\frac{\gamma}{2}}I_{\liang{(r-1)}n}
\end{bmatrix}\wtd \Pi^{\T}$.   
Writing
	\begin{alignat*}{2}
		M&=\mathcal{A}+\gamma \mathcal{B}\restrict_{\mathcal{A}}
		&&=\begin{bmatrix}
			C^{\T}C & A^{\T}-\gamma I_n
			\\
			\gamma I_n+A & -BB^{\T}
			\\
			\what A &  -\what BB^{\T}
		\end{bmatrix}
		,\\
		L&= ( \mathcal{A}\restrict_{\mathcal{B}} -\gamma \mathcal{B} )
		\begin{bmatrix}
			Q & \\ &\!\!\! Q
		\end{bmatrix}
		&&=\begin{bmatrix}
			C^{\T}C & 0 &\!\! A^{\T}+\gamma I_n & \sqrt{2\gamma} \what A^{\T}
			\\
			A-\gamma I_n & 0 & -BB^{\T} &\!\! -\sqrt{2\gamma} B\what B^{\T}
			\\
			\what A&\!\! -\sqrt{2\gamma} I_{\liang{(r-1)}n} &-\what BB^{\T} &\!\!-\sqrt{2\gamma} \what B\what B^{\T}
			\end{bmatrix}\! \begin{bmatrix}
			\wtd \Pi^{\T} & \\ &\!\!\! \wtd \Pi^{\T}
		\end{bmatrix}\!
		,
	\end{alignat*}
it can be seen that $(M,L)$ is a $\ltimes$-symplectic pair, because
$
M\ltimes J\ltimes M^{\T}
=L\ltimes J\ltimes L^{\T}$.

To apply the doubling transformation to the $\ltimes$-symplectic pair $(M,L)$, it is necessary to simplify it to a simpler form, say, $\ltimes$-SSF1 pair, 
whose existence is guaranteed by \cref{thm:initial-first-standard-form:scare}.

\begin{lemma}\label{thm:initial-first-standard-form:scare}
	Given $\gamma\ge 0$ such that
		$A_\gamma:=A-\gamma I_n$
	are nonsingular. Then $(M,L)$ is equivalent to a $\ltimes$-SSF1 pair $(\Theta_\gamma,\Phi_\gamma)$, namely there exists a nonsingular matrix $T$ 
	such that
	\begin{equation}\label{eq:Theta-Phi:scare}
			\Theta_{\gamma}=TM=\begin{bmatrix}
				E_{\gamma} & 0_{\liang{r}n\times n}
				\\
				-H_{\gamma} & I_n
			\end{bmatrix}
			_{\liang{(r+1)n\times 2n}}
			, \qquad
			\Phi_{\gamma}=TL=\begin{bmatrix}
				I_{\liang{r}n} & G_{\gamma} 
				\\
				0_{n\times \liang{r}n} & E_{\gamma}^{\T}
			\end{bmatrix}
			_{\liang{(r+1)n\times 2rn}}
			,
	\end{equation}
	where 
	\begin{subequations}\label{eq:EGH:scare}
		\begin{alignat}{2}
			\label{eq:E:scare}
			E_\gamma &= \wtd\Pi\begin{bmatrix}
				A_\gamma+2\gamma I_n+BZ_\gamma^{\T}C
				\\
				\sqrt{2\gamma}(\what A +\what BZ_\gamma ^{\T}C )
			\end{bmatrix}
			(I_{\liang{n}}+A_\gamma^{-1}B Z_\gamma ^{\T}C)^{-1}	A_\gamma^{-1}
			&&\liang{\in \mathbb{R}^{rn\times n}}
			,\\
			\label{eq:H:scare}
			H_\gamma &=
			2\gamma A_\gamma^{-\T}C^{\T}(I_{\liang{l}}+Z_\gamma Z_\gamma^{\T})^{-1}CA_\gamma^{-1}\succeq 0
			&&\liang{\in \mathbb{R}^{n\times n}}
			, \\
			\label{eq:G:scare}
			G_\gamma &=
			\wtd \Pi\begin{bmatrix}
				\sqrt{2\gamma}  A_\gamma ^{-1}B 
				\\
\what A A_\gamma ^{-1}B-\what B
			\end{bmatrix}
			(I_{\liang{m}}+Z_\gamma^{\T}Z_\gamma)^{-1}
			\begin{bmatrix}
				\sqrt{2\gamma}  A_\gamma ^{-1}B 
				\\
\what A A_\gamma ^{-1}B-\what B
			\end{bmatrix}^{\T}
			\wtd \Pi^{\T}
			\succeq0
			&&\liang{\in \mathbb{R}^{rn\times rn}}
			.
		\end{alignat}
	\end{subequations}	
	Here $Z_\gamma = CA_\gamma^{-1}B$.
\end{lemma}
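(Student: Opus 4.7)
The strategy is constructive. View $T\in\mathbb{R}^{(r+1)n\times(r+1)n}$ in block form
$T=\begin{bmatrix} T_{11} & T_{12} & T_{13} \\ T_{21} & T_{22} & T_{23} \end{bmatrix}$
with row-block sizes $(rn,n)$ and column-block sizes $(n,n,(r-1)n)$, matching the three row blocks of $M$ and $L$. Four block-identity/zero conditions will pin down the six blocks up to the pivot $A_\gamma$, which is nonsingular by hypothesis: bottom-right of $TM$ equal to $I_n$; bottom-left of $TL$ equal to $0$; top-right of $TM$ equal to $0$; top-left of $TL$ equal to $I_{rn}$. The remaining two blocks of $TM$ and $TL$ then \emph{define} $E_\gamma$, $H_\gamma$, $G_\gamma$, so the task reduces to checking that they coincide with \cref{eq:EGH:scare}.

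I would begin with the bottom row-block. Vanishing of the bottom-left of $TL$ splits across the two natural sub-widths $n$ and $(r-1)n$ of $L$ (before the $\wtd\Pi$ action): the wider sub-block involves only $-\sqrt{2\gamma}\,T_{23}$, forcing $T_{23}=0$; the narrower one then gives $T_{22}=-T_{21}C^TC A_\gamma^{-1}$. The $I_n$ condition on $TM$ then collapses to
$T_{21}(A_\gamma^T+C^TC A_\gamma^{-1}BB^T)=I_n$,
which \cref{eq:smwf} inverts explicitly in terms of $Z_\gamma=CA_\gamma^{-1}B$ and $(I_l+Z_\gamma Z_\gamma^T)^{-1}$. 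Substituting back into the bottom-left of $TM$ and using $A_\gamma^{-1}(\gamma I_n+A)=I_n+2\gamma A_\gamma^{-1}$ collapses $-H_\gamma$ to $-2\gamma T_{21}C^TC A_\gamma^{-1}$, which after algebraic simplification becomes exactly \cref{eq:H:scare}. An entirely analogous procedure—pivoting now on the $A_\gamma$ appearing in the second row block of $L$—determines $[T_{11},T_{12},T_{13}]$ and produces $E_\gamma$ and $G_\gamma$ after further applications of \cref{eq:smwf}; in particular, factoring $A_\gamma^{-1}$ out to the right should yield the precise shape $(I_n+A_\gamma^{-1}BZ_\gamma^TC)^{-1}A_\gamma^{-1}$ appearing in \cref{eq:E:scare}.

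Positive semi-definiteness of $H_\gamma$ and $G_\gamma$ is immediate from their sandwich forms as $U^T(I_l+Z_\gamma Z_\gamma^T)^{-1}U$ and $V(I_m+Z_\gamma^TZ_\gamma)^{-1}V^T$, respectively, since both inner matrices are positive definite. Nonsingularity of $T$ follows from its resulting block-triangular structure combined with the invertibility of the involved pivots. I expect the main obstacle to be the bookkeeping with the permutation $\wtd\Pi$ and the repeated, directionally-correct applications of Sherman--Morrison--Woodbury required to recognize the compact $Z_\gamma$-based forms of $E_\gamma$ and $G_\gamma$; once those emerge, the final equalities are a matter of direct algebra.
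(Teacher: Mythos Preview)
Your approach is essentially the same as the paper's: both perform block Gaussian elimination on $(M,L)$ to reach $\ltimes$-SSF1 form, pivoting on the nonsingular $A_\gamma$ block and simplifying via \cref{eq:easy}/\cref{eq:smwf} in terms of $Z_\gamma$. The paper simply records $T$ explicitly as a product of five nonsingular elementary block matrices (so nonsingularity of $T$ is immediate), then verifies $TM,TL$; your version solves for the blocks of $T$ from the four structural constraints, which yields the same blocks. One caution: your claim that ``nonsingularity of $T$ follows from its resulting block-triangular structure'' is loose, since with row blocks $(rn,n)$ and column blocks $(n,n,(r-1)n)$ there is no square block-triangular pattern to appeal to---either verify invertibility directly from the computed blocks, or (cleaner) repackage your elimination steps as the paper does, i.e.\ as a product of explicit nonsingular factors.
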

\begin{proof}
	 Directly use block elementary row transformations to obtain \cref{eq:Theta-Phi:scare}.
	In fact,  construct
	\[
		\begin{multlined}[t]
		T=
\begin{bmatrix}
		\wtd \Pi& \\ & I_n
	\end{bmatrix}
\begin{bmatrix}
		I_n & & A_\gamma ^{-1}BB^{\T}
		\\
		& I_{\liang{(r-1)}n} & \frac{1}{\sqrt{2\gamma}}K_{\gamma}B^{\T}
		\\
		& & I_n
	\end{bmatrix}\begin{bmatrix}
		I_n & & 
		\\
		& I_{\liang{(r-1)}n} &
		\\
		& & -W_{\gamma}^{-1}
	\end{bmatrix}
	\cdot\\\qquad\qquad\qquad
	\begin{bmatrix}
			I_n &&\\ & I_{\liang{(r-1)}n} & \\ -C^{\T}C & 0 & I_n
	\end{bmatrix}\begin{bmatrix}
		A_\gamma ^{-1} & 0 & 
		\\
		\frac{1}{\sqrt{2\gamma}}\what A A_\gamma ^{-1}& -\frac{1}{\sqrt{2\gamma}}I_{\liang{(r-1)}n} & 
		\\
		&&I_n
		\end{bmatrix}\*\begin{bmatrix}
		& I_{\liang{r}n} \\ I_n
	\end{bmatrix},
		\end{multlined}
\]
where $W_{\gamma}=-A_\gamma^{\T}-C^{\T}CA_\gamma^{-1}BB^{\T}=-(I_{\liang{n}}+C^{\T}Z_\gamma B^{\T}A_\gamma^{-\T})A_\gamma^{\T}$,
and $K_{\gamma}=\what A A_\gamma ^{-1}B-\what B$.
Note that 
	$(I_{\liang{n}}+C^{\T}Z_\gamma B^{\T}A_\gamma^{-\T})^{-1}
\clue{\cref{eq:smwf}}{=}
I_{\liang{n}}-C^{\T}(I_{\liang{l}}+Z_\gamma Z_\gamma^{\T})^{-1}Z_\gamma B^{\T}A_\gamma^{-\T}
$ implies $W_\gamma$ is nonsingular.
	Some calculation gives
	\begin{equation}\label{eq:pf:TM-TL}
		\begin{aligned}
					TM&=
					\begin{bmatrix}
						\wtd\Pi\begin{bmatrix}
							I_n-2\gamma W_{\gamma}^{-\T} 
							\\
							-\sqrt{2\gamma}(\what A +\what BZ_\gamma ^{\T}C )W_{\gamma}^{-\T}
						\end{bmatrix}
						&\\
					2\gamma W_{\gamma}^{-1}C^{\T}C A_\gamma ^{-1}	& I_n
					\end{bmatrix}
					,\\
					TL&=
					\begin{bmatrix}
						I_{\liang{r}n}  &-\wtd\Pi\begin{bmatrix}
							  2\gamma  A_\gamma ^{-1}BB^{\T}W_{\gamma}^{-1} & \sqrt{2\gamma}W_{\gamma}^{-\T}BK_{\gamma}^{\T}
			                  \\
							 \sqrt{2\gamma}K_{\gamma}B^{\T}W_{\gamma}^{-1} & -K_{\gamma}(I_{\liang{m}}+ Z_\gamma^{\T}Z_\gamma )^{-1}K_{\gamma}^{\T}
						\end{bmatrix}\wtd\Pi^{\T}
						\\
							    & \begin{bmatrix}
							    	I_n-2\gamma W_{\gamma}^{-1}                   & -\sqrt{2\gamma}W_{\gamma}^{-1}(\what A^{\T} +C^{\T}Z_\gamma \what B^{\T})
							    \end{bmatrix}\wtd\Pi^{\T}
					\end{bmatrix}
					.
		\end{aligned}
	\end{equation}
	Then we show \cref{eq:pf:TM-TL} is actually \cref{eq:Theta-Phi:scare}.
	For $H_\gamma$,
	\[
		\begin{aligned}[t]
					-2\gamma W_{\gamma}^{-1}C^{\T}C A_\gamma ^{-1}
			&=
			2\gamma A_\gamma^{-\T}(I_{\liang{n}}+C^{\T}Z_\gamma B^{\T}A_\gamma^{-\T})^{-1}C^{\T}CA_\gamma^{-1}
			\clue{\cref{eq:easy}}{=} 
			2\gamma A_\gamma^{-\T}C^{\T}(I_{\liang{l}}+Z_\gamma Z_\gamma^{\T})^{-1}CA_\gamma^{-1}
			=H_\gamma;
		\end{aligned}
	\]
	for $G_\gamma$, since $B^{\T}W_\gamma^{-1}=-B^{\T}A_\gamma^{-\T}(I_{\liang{n}}+C^{\T}Z_\gamma B^{\T}A_\gamma^{-\T})^{-1}=-(I_{\liang{m}}+Z_\gamma^{\T}Z_\gamma)^{-1}B^{\T}A_\gamma^{-\T}$,
\[
	\begin{aligned}[t]
	\MoveEqLeft -\wtd\Pi\begin{bmatrix}
		2\gamma  A_\gamma ^{-1}BB^{\T}W_{\gamma}^{-1} & \sqrt{2\gamma}W_{\gamma}^{-\T}BK_{\gamma}^{\T}
		\\
		\sqrt{2\gamma}K_{\gamma}B^{\T}W_{\gamma}^{-1} & -K_{\gamma}(I_{\liang{m}}+Z_\gamma^{\T}Z_\gamma )^{-1}K_{\gamma}^{\T}
	\end{bmatrix}\wtd\Pi^{\T}
	=\wtd \Pi\begin{bmatrix}
			\sqrt{2\gamma}  A_\gamma ^{-1}B 
			\\
			K_{\gamma}
		\end{bmatrix}
		(I_{\liang{m}}+Z_\gamma^{\T}Z_\gamma)^{-1}
		\begin{bmatrix}
			\sqrt{2\gamma}  A_\gamma ^{-1}B 
			\\
			K_{\gamma}
		\end{bmatrix}^{\T}
		\wtd \Pi^{\T}
		=G_\gamma;
	\end{aligned}
\]
for $E_\gamma$,
\[
	\begin{aligned}[b]
		\MoveEqLeft \wtd\Pi\begin{bmatrix}
			I_n-2\gamma W_{\gamma}^{-\T} 
			\\
			-\sqrt{2\gamma}(\what A +\what BZ_\gamma ^{\T}C )W_{\gamma}^{-\T}
		\end{bmatrix}
		=
		\wtd\Pi\begin{bmatrix}
			2\gamma I_n-W_\gamma^{\T}
			\\
			\sqrt{2\gamma}(\what A +\what BZ_\gamma ^{\T}C )
		\end{bmatrix}
		(I_{\liang{n}}+A_\gamma^{-1}B Z_\gamma ^{\T}C)^{-1}	A_\gamma^{-1}
		=E_\gamma.
	\end{aligned}
\]
\end{proof}

Note that \cref{eq:ML-gamma:scare,eq:Theta-Phi:scare} give
	\begin{equation}\label{eq:Theta-Phi-invariant-subspace:scare}
		\Theta_{\gamma}\ltimes \begin{bmatrix}
			I_{\liang{n}} \\ X
		\end{bmatrix}
		=
		\Phi_{\gamma}\ltimes \begin{bmatrix}
			I_{\liang{n}} \\ X
		\end{bmatrix}
		\ltimes \left(\wtd \Pi\begin{bmatrix}
				A_F+\gamma I_{\liang{n}} \\ \sqrt{2\gamma}\what A_F
		\end{bmatrix}(A_F-\gamma I_{\liang{n}})^{-1}\right),
	\end{equation}
	Comparing \cref{eq:Theta-Phi-invariant-subspace:scare} with \cref{eq:sdare-matrix}, 
	similar $\ltimes$-symplectic (or detailedly $\ltimes$-SSF1) structures appear in both SCAREs and SDAREs,
	as CAREs and DAREs share similar symplectic structures.

\begin{theorem}\label{cor:scare-to-sdare:scare}
	The SCARE \cref{eq:scare-standard-form:scare} is equivalent to the following SDARE:
	\begin{equation}\label{eq:sdare:scare}
		X = E_{\gamma}^{\T} \ltimes X \ltimes 
		(I_{\liang{rn}} +G_{\gamma} \ltimes X )^{-1} \ltimes  E_{\gamma}+H_{\gamma},
	\end{equation}
	where $E_\gamma,G_\gamma,H_\gamma$ are as in \cref{thm:initial-first-standard-form:scare} for proper $\gamma>0$.
	(Here that $\gamma>0$ is proper means $A-\gamma I_n, A_F-\gamma I_n, I_{\liang{rn}}+G_\gamma\ltimes X$ are all nonsingular.)

	Moreover,
	the SDARE \cref{eq:sdare:scare} satisfies \ref{item:item1}--\ref{item:item3}, so it has a unique positive semi-definite stabilizing solution, which is also the unique stabilizing solution of the SCARE \cref{eq:scare-standard-form:scare}.
\end{theorem}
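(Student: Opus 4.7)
The plan is to split the proof into three stages: (i) derive the SDARE \cref{eq:sdare:scare} directly from the $\ltimes$-SSF1 identity \cref{eq:Theta-Phi-invariant-subspace:scare}, (ii) verify that this SDARE inherits assumptions \ref{item:item1}--\ref{item:item3}, and (iii) identify its unique positive semi-definite stabilizing solution with the stabilizing solution of the SCARE.

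For stage (i), I will expand \cref{eq:Theta-Phi-invariant-subspace:scare} block-wise using \cref{eq:Theta-Phi:scare}. Writing
\[
	K_\gamma := \wtd\Pi\begin{bmatrix}A_F+\gamma I_n\\ \sqrt{2\gamma}\what A_F\end{bmatrix}(A_F-\gamma I_n)^{-1},
\]
the top block yields $E_\gamma = (I_{rn}+G_\gamma\ltimes X)\ltimes K_\gamma$, so that $K_\gamma = (I_{rn}+G_\gamma\ltimes X)^{-1}\ltimes E_\gamma$ whenever $I_{rn}+G_\gamma\ltimes X$ is invertible (which is the sense of ``proper $\gamma$''). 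The bottom block gives $X-H_\gamma = E_\gamma^{\T}\ltimes X\ltimes K_\gamma$, and substitution of $K_\gamma$ yields exactly \cref{eq:sdare:scare}. Running the chain of equivalences in the reverse direction, and invoking \cref{eq:ML-gamma:scare}, establishes the equivalence to the SCARE \cref{eq:scare-standard-form:scare}.

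For stage (ii), factor $G_\gamma = B_\gamma B_\gamma^{\T}$ and $H_\gamma = C_\gamma^{\T}C_\gamma$ (possible because $G_\gamma,H_\gamma\succeq 0$ by \cref{thm:initial-first-standard-form:scare}), so that \cref{eq:sdare:scare} fits the standard form \cref{eq:sdare-equivalent1} with $(A,B,C,R)\leftarrow(E_\gamma,B_\gamma,C_\gamma,I_m)$. Then \ref{item:item1} is immediate since $R=I_m\succ 0$. For \ref{item:item2}--\ref{item:item3}, I will invoke the Cayley-type correspondence: the continuous-time closed-loop operator $\op L_{F_\star}$ is exponentially stable iff the discrete-time operator associated with the $\ltimes$-block $\wtd \Pi\begin{bmatrix}A_{F_\star}+\gamma I\\ \sqrt{2\gamma}\what A_{F_\star}\end{bmatrix}(A_{F_\star}-\gamma I)^{-1}$ (i.e., $K_\gamma$ with $X=X_\star$) has spectral radius less than one. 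This is the stochastic generalization of the classical Möbius/Cayley fact that $\rho((M+\gamma I)(M-\gamma I)^{-1})<1 \Leftrightarrow M$ is continuous-time stable; in the multiplicative noise setting it has to be checked at the level of the Lyapunov-type operators $\op L_F$ and $\op S_{F_d}$ and can be obtained using the Lyapunov-inequality characterizations of stability for both types (as in \cite{draganMS2010mathematical,draganMS2013mathematical}). The same correspondence transfers the stabilizability of $(\{A_i\},\{B_i\})$ to that of $(E_\gamma,B_\gamma)$ in the sense of \ref{item:item2}, and likewise the detectability assumption \ref{item:item3:scare} becomes \ref{item:item3}.

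Stage (iii) is then brief: the cited existence theorem (\cite[Theorem~5.14]{draganMS2010mathematical}) supplies the unique positive semi-definite stabilizing solution $X_\star$ of \cref{eq:sdare:scare}, and by the equivalence of stage (i) this $X_\star$ solves \cref{eq:scare-standard-form:scare}; by the Cayley correspondence of stage (ii), discrete-time stability of its SDARE closed-loop is equivalent to continuous-time stability of $\op L_{F_{X_\star}}$, so $X_\star$ is the unique stabilizing solution of the SCARE, matching the uniqueness provided by \cite[Theorem~5.6.15]{draganMS2013mathematical}. I expect stage (ii), verifying the Möbius correspondence of stabilizability and detectability in the presence of the left semi-tensor product, to be the main obstacle: although morally it is the Cayley transform, the coupling introduced by the $\sqrt{2\gamma}\what A_F$ block together with the fact that the transform is not applied uniformly to all noise channels means the continuous-to-discrete transfer of the Lyapunov inequalities needs to be carried out explicitly, rather than cited.
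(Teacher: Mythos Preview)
Your proposal is correct and follows essentially the same route as the paper. Stage~(i) matches the paper exactly: read off the two block equations from \cref{eq:Theta-Phi-invariant-subspace:scare} and eliminate $K_\gamma$. For stage~(ii), the paper does precisely the explicit Cayley computation you flagged as the obstacle: starting from $\op L_{\what F_{X_\star}}^{*}S\prec 0$ for some $S\succ 0$, substitute $A_F=\gamma(K-I)^{-1}(K+I)$, apply the congruence by $(K-I)$, and use $K-I=2\gamma(A_F-\gamma I)^{-1}$ to land on the discrete Lyapunov inequality $K^{\T}SK+2\gamma(\what A_F(A_F-\gamma I)^{-1})^{\T}\ltimes S\ltimes(\what A_F(A_F-\gamma I)^{-1})\prec S$, which gives $\rho(\wtd{\op S})<1$.

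One nuance worth noting: the paper does not transfer stabilizability of $(\{A_i\},\{B_i\})$ to $(E_\gamma,B_\gamma)$ abstractly as you suggest. Instead it uses the specific feedback arising from the SCARE's stabilizing solution $X_\star$ (whose existence is already secured by \ref{item:item1:scare}--\ref{item:item3:scare}) to witness \ref{item:item2} directly, via the identity $(I_{rn}+G_\gamma\ltimes X_\star)^{-1}\ltimes E_\gamma = K_\gamma\vert_{X=X_\star}$. This sidesteps having to match up arbitrary stabilizing feedbacks across the M\"obius transform. The detectability \ref{item:item3} is dispatched by duality in one line; the paper also adds a short argument that a proper $\gamma$ exists (the determinant $\det(I_{rn}+G_\gamma\ltimes X)$ is a nonzero rational function of $\gamma$), which you may want to include.
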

\begin{proof}
	It follows from \cref{eq:Theta-Phi-invariant-subspace:scare,eq:Theta-Phi:scare} that 
	\begin{align*}
		E_{\gamma}&=(I_{\liang{rn}}+G_{\gamma}\ltimes X )
		\wtd \Pi\begin{bmatrix}
			A_{F}+\gamma I_n \\ \sqrt{2\gamma}\what A_{F}
		\end{bmatrix}(A_F-\gamma I_n)^{-1}
		\\
		X-H_{\gamma}&=(E_{\gamma}^{\T}\ltimes X)
		\wtd \Pi\begin{bmatrix}
			A_{F}+\gamma I_n \\ \sqrt{2\gamma}\what A_{F}
		\end{bmatrix}(A_F-\gamma I_n)^{-1},
	\end{align*}
	yielding that $X-H_{\gamma} = (E_{\gamma}^{\T}\ltimes X)
	(I_{\liang{rn}}+G_{\gamma}\ltimes X)^{-1}E_{\gamma}$, which is equivalent to \cref{eq:sdare:scare}. 

	Here an issue is whether $I_{\liang{rn}}+G_\gamma\ltimes X$ is nonsingular. Note that for the solution $X$ to the SCARE, $\det(I_{\liang{rn}}+G_\gamma\ltimes X)$ is a \liang{nonzero rational function} and hence the number of $\gamma$'s to make $I_{\liang{rn}}+G_\gamma\ltimes X$ singular is finite.
	Thus there must be at least one $\gamma$ (in fact almost every real number) to meet the requirement.

	The thing left to prove is the SDARE \cref{eq:sdare:scare} has a unique positive semi-definite stabilizing solution.
	The three matrices $E_\gamma,G_\gamma,H_\gamma$
	play the role of $A,BB^{\T},C^{\T}C$ in the SDARE \cref{eq:sdare-equivalent1}.
	Note that \ref{item:item1} holds naturally; \ref{item:item2} is guaranteed by $\N{(I_{\liang{rn}}+G_\gamma \ltimes X_\star)^{-1}\ltimes E_\gamma}<1$ for \liang{some} induced norm $\N{\cdot}$ by \cref{eq:close-loop-matrix}; 
	\ref{item:item3} is similar to \ref{item:item2}.
	Therefore, we will only show 
	\begin{equation}\label{eq:cor:proof:rho<1}
		\begin{aligned}[t]
				\N{(I_{\liang{rn}}+G_\gamma\ltimes X_\star)^{-1}\ltimes E_\gamma}
				&=\N*{\wtd \Pi\begin{bmatrix}
					A_{F}+\gamma I_n \\ \sqrt{2\gamma}\what A_{F}
			\end{bmatrix}(A_F-\gamma I_n)^{-1}}
			=\N*{\begin{bmatrix}
					A_{F}+\gamma I_n \\ \sqrt{2\gamma}\what A_{F}
			\end{bmatrix}(A_F-\gamma I_n)^{-1}}
			<1
		\end{aligned}
	\end{equation}
	for \liang{some} induced norm $\N{\cdot}$.

	Recall the assumption \ref{item:item2:scare}.
	Note that the adjoint of the Lyapunov operator $\op L_{\liang{\what F_X}}$ for the standard form \cref{eq:scare-standard-form:scare} is rewritten as  
\begin{equation}\label{eq:close-loop-matrix-FX:scare}
	\begin{aligned}[t]
		\op L_{\liang{\what F_X}}^{*}S
		&=(A+B\liang{\what F_X})^{\T}S+S(A+B\liang{\what F_X})+(\what A+\what B\liang{\what F_X})^{\T}\ltimes S\ltimes (\what A+\what B\liang{\what F_X}) 
		=A_F^{\T}S+SA_F+\what A_F^{\T}\ltimes S\ltimes \what A_F
		.
	\end{aligned}
\end{equation}
\liang{\cite[Theorem~1.5.3]{damm2004rational}} tells the fact that \ref{item:item2:scare} is equivalent to the spectra of the Lyapunov operator $\op L_F^{*}$ \liang{being in} the interior of the left  half plane, i.e., $\rho(\op L_F^{*})\in \mathbb{C}_{-}$,
and then for $\op L_{\liang{\what F_X}}^{*}$ in \cref{eq:close-loop-matrix-FX:scare} there exists $S\succ 0$ such that $\op L_{\liang{\what F_X}}^{*} S\prec 0$.
For $\gamma>0$ to make $A_F -\gamma I$ nonsingular, 
substituting 
\begin{equation*}\label{eq:Cayley:scare}
A_F =\gamma (K-I)^{-1}(K+I) \Leftrightarrow K=(A_F +\gamma I)(A_F -\gamma I)^{-1}
\end{equation*}
into the Lyapunov operator $\op L_{\liang{\what F_X}}^{*}$ in \cref{eq:close-loop-matrix-FX:scare} gives
\[
	\gamma (K-I)^{-\T}(K+I)^{\T} S + \gamma S(K+I)(K-I)^{-1}+\what A_F ^{\T}\ltimes S\ltimes \what A_F \prec 0.
\]
By a congruent transformation, it is equivalent to 
\begin{align*}\label{eq:continuous-to-discrete:scare}
	0	
		&\succ \gamma (K+I)^{\T}S(K-I)+\gamma (K-I)^{\T}S(K+I)+(K-I)^{\T}\what A_F ^{\T}\ltimes S\ltimes \what A_F (K-I)
		\\
		&=2\gamma K^{\T}SK-2\gamma S + \left(\what A_F (K-I)\right)^{\T}\ltimes S\ltimes \left(\what A_F (K-I)\right)
		\\
		\intertext{by $K-I=(A_F +\gamma I)(A_F -\gamma I)^{-1}-I=2\gamma(A_F -\gamma I)^{-1}$,}
		&=2\gamma K^{\T}SK-2\gamma S +4\gamma^2 \left(\what A_F (A_F -\gamma I)^{-1}\right)^{\T}\ltimes S\ltimes\left(\what A_F (A_F -\gamma I)^{-1}\right)
		\\
		&=2\gamma\left[K^{\T}SK- S +2\gamma \left(\what A_F (A_F -\gamma I)^{-1}\right)^{\T}\ltimes S\ltimes\left(\what A_F (A_F -\gamma I)^{-1}\right)\right]
		,
\end{align*}
which implies
\[
	K^{\T}SK + 2\gamma \left(\what A_F (A_F -\gamma I)^{-1}\right)^{\T}\ltimes S\ltimes\left(\what A_F (A_F -\gamma I)^{-1}\right)\prec S.
\]
Then for $S\succ0$,
	\[
		\wtd{\op S}(S):=
		\begin{multlined}[t]
			(A_F -\gamma I)^{-\T}(A_F +\gamma I)^{\T}S(A_F +\gamma I)(A_F -\gamma I)^{-1}
			+ 2\gamma \left(\what A_F (A_F -\gamma I)^{-1}\right)^{\T}\ltimes S\ltimes\left(\what A_F (A_F -\gamma I)^{-1}\right)
		\end{multlined}	
	\]
	is exponentially stable \liang{\cite[Theorem~2.12]{draganMS2010mathematical}}, that is, $\rho(\wtd{\op S})<1$ or \cref{eq:cor:proof:rho<1} holds.
\end{proof}


Following \cref{cor:scare-to-sdare:scare} one can solve SCARE \cref{eq:scare-standard-form:scare} by any method solving the equivalent SDARE \cref{eq:sdare:scare}.
One is the fixed point iteration: 
\begin{equation*}\label{eq:fixed-point:scare}
	\begin{aligned}
		X_0& = 0,
		\qquad X_1=H_\gamma,
		\\
		X_{t+1}& 
		= E_{\gamma}^{\T} \ltimes X_t \ltimes 
		(I_{\liang{rn}} +G_{\gamma} \ltimes X_t )^{-1} \ltimes  E_{\gamma}+H_{\gamma}.
	\end{aligned}
\end{equation*}
Another is the doubling iteration:
	\begin{subequations}\label{eq:doubling:scare}
		\begin{align}
			E_k&=E_{k-1}\ltimes (I_{\liang{r^{2^{k-1}}n}}+G_{k-1}\ltimes H_{k-1})^{-1}\ltimes E_{k-1},
			\label{eq:doubling:E:scare}
			\\
			G_{k}&=G_{k-1}\otimes I_{\liang{r}^{2^{k-1}}}+E_{k-1}\ltimes (I_{\liang{r^{2^{k-1}}n}}+G_{k-1}\ltimes H_{k-1})^{-1}\ltimes G_{k-1}\ltimes E_{k-1}^{\T},
			\label{eq:doubling:G:scare}
			\\
			H_k&=H_{k-1}+E_{k-1}^{\T}\ltimes H_{k-1}\ltimes (I_{\liang{r^{2^{k-1}}n}}+G_{k-1}\ltimes H_{k-1})^{-1}\ltimes E_{k-1},
			\label{eq:doubling:H:scare}
		\end{align}
	\end{subequations}
initially with $E_0=E_{\gamma}, G_0=G_{\gamma}, H_0=H_{\gamma}$ in \cref{eq:EGH:scare}. 

Since the whole story from here on will be nearly the same as that for SDAREs, we will only briefly state the results in the following.
Besides, the properties of the fixed point iteration will also omitted, for it has been accelerated by the doubling iteration.

\begin{lemma}\label{thm:sdare-doubling:scare}
	Let $\Theta_k=\begin{bmatrix}
		E_k & 0 \\ H_k & I_{\liang{n}}
		\end{bmatrix}_{\liang{(r^{2^k}+1)n\times 2n}}$ and $\Phi_k=\begin{bmatrix}
		I_{\liang{r}^{2^k}n} & -G_k \\ 0 & E_k^{\T}
	\end{bmatrix}_{\liang{(r^{2^k}+1)n\times 2r^{2^k}n}}$. 
	Then for the doubling iteration \cref{eq:doubling:scare} with $E_0=E_{\gamma}, G_0=G_{\gamma}, H_0=H_{\gamma}$ in \cref{eq:EGH:scare}, the following statements hold:
	\begin{enumerate}
		\item \label{thm:sdare-doubling:item1:scare}
			$(\Theta_k, \Phi_k)$ is a $\ltimes$-SSF1 pair;
		\item \label{thm:sdare-doubling:item3:scare}
			$(\Theta_k,\Phi_k)\to(\Theta_{k+1}, \Phi_{k+1})=(\Theta'_k\ltimes \Theta_k, \Phi'_k\ltimes \Phi_k)$ is a $\ltimes$-doubling transformation, where
			\begin{align*}
				\Theta'_k&=\begin{bmatrix}
						E_k\ltimes (I_{\liang{r^{2^k}n}}+G_k\ltimes H_k)^{-1} & 0
						\\
						E_k^{\T}\ltimes(I_{\liang{r^{2^k}n}}+H_k\ltimes G_k)^{-1}\ltimes H_k & I_n
					\end{bmatrix}
					_{\liang{(r^{2^{k+1}}+1)n\times (r^{2^k}+1)n}}
					, \qquad 
					\\
					\Phi'_k&=\begin{bmatrix}
						I_{\liang{r}^{2^{k+1}}n} & -E_k\ltimes G_k\ltimes (I_{\liang{r^{2^k}n}}+H_k\ltimes G_k)^{-1}
						\\
						0 & E_k^{\T}\ltimes (I_{\liang{r^{2^k}n}}+H_k\ltimes G_k)^{-1}
					\end{bmatrix}
					_{\liang{(r^{2^{k+1}}+1)n\times (r^{2^{k+1}}+r^{2^k})n}};
			\end{align*}
		\item \label{thm:sdare-doubling:item4:scare}
			it holds for $k=0,1,2,\dots$ that 
			\begin{equation*}\label{eq:doubling-ltimes-k:scare}
				\Theta_k\ltimes \begin{bmatrix}
					I_{\liang{n}} \\ X
				\end{bmatrix} 
				=
				\Phi_k \ltimes \begin{bmatrix}
					I_{\liang{n}} \\ X
				\end{bmatrix}
				\ltimes 
				\left(\wtd \Pi\begin{bmatrix}
						A_F+\gamma I_{\liang{n}} \\ \sqrt{2\gamma}\what A_F
				\end{bmatrix}(A_F-\gamma I_{\liang{n}})^{-1}\right)^{\ltimes 2^k}.
			\end{equation*}
	\end{enumerate}
\end{lemma}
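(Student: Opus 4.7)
The plan is to imitate the proof of Lemma~\ref{thm:sdare-doubling} essentially verbatim, since the recursions \cref{eq:doubling:E:scare,eq:doubling:G:scare,eq:doubling:H:scare} have exactly the same algebraic shape as \cref{eq:sda:A,eq:sda:G,eq:sda:H}, with $E_k$ now playing the role of $A_k$. Throughout, symmetry of $G_k$ and $H_k$ is preserved by the recursions in the same way; their initial instances $G_\gamma$ and $H_\gamma$ are symmetric (in fact positive semi-definite) by \cref{eq:G:scare,eq:H:scare}.

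For part~\ref{thm:sdare-doubling:item1:scare}, I would check directly from the block structure of $\Theta_k,\Phi_k$ together with the symmetry of $G_k,H_k$ that $\Theta_k\ltimes J\ltimes \Theta_k^{\T}=\Phi_k\ltimes J\ltimes \Phi_k^{\T}$; both sides collapse to a block-antisymmetric matrix built only from $E_k$. For part~\ref{thm:sdare-doubling:item3:scare}, the core step is to verify by block multiplication the commutation identity $\Theta'_k\ltimes \Phi_k=\Phi'_k\ltimes \Theta_k$, which is the same pattern of computation that produced \cref{eq:Theta-Phi,eq:doubling-ltimes} in the SDARE setting (with signs adjusted). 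Then one confirms that $\Theta'_k\ltimes \Theta_k$ and $\Phi'_k\ltimes \Phi_k$ indeed agree with $\Theta_{k+1}$ and $\Phi_{k+1}$ via the defining recursions \cref{eq:doubling:scare}. The full-row-rank hypothesis on $\begin{bmatrix}\Theta'_k & \Phi'_k\end{bmatrix}$ is immediate from the block lower-triangular structure of $\Theta'_k$ with $I_n$ on the diagonal.

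For part~\ref{thm:sdare-doubling:item4:scare}, I would proceed by induction on $k$. The base case $k=0$ is exactly \cref{eq:Theta-Phi-invariant-subspace:scare}, provided by Lemma~\ref{thm:initial-first-standard-form:scare}. For the inductive step, writing $A_X:=\wtd \Pi\begin{bmatrix} A_F+\gamma I_n \\ \sqrt{2\gamma}\what A_F \end{bmatrix}(A_F-\gamma I_n)^{-1}$, the chain
\[
\begin{aligned}
\Theta_{k+1}\ltimes\begin{bmatrix} I\\X\end{bmatrix}
&=\Theta'_k\ltimes\Theta_k\ltimes\begin{bmatrix} I\\X\end{bmatrix}
=\Theta'_k\ltimes\Phi_k\ltimes\begin{bmatrix} I\\X\end{bmatrix}\ltimes A_X^{\ltimes 2^k}\\
&=\Phi'_k\ltimes\Theta_k\ltimes\begin{bmatrix} I\\X\end{bmatrix}\ltimes A_X^{\ltimes 2^k}
=\Phi_{k+1}\ltimes\begin{bmatrix} I\\X\end{bmatrix}\ltimes A_X^{\ltimes 2^{k+1}}
\end{aligned}
\]
applies the induction hypothesis twice and the commutation identity in the middle, yielding the required relation at step $k+1$. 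This is the verbatim analogue of the chain in the proof of Lemma~\ref{thm:sdare-doubling}\ref{thm:sdare-doubling:item4}, with $A_X$ replacing $(I_{rn}+BB^{\T}\ltimes X)^{-1}\ltimes A$.

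The only real subtlety I foresee is the bookkeeping of sign conventions: the $\ltimes$-SSF1 form produced by Lemma~\ref{thm:initial-first-standard-form:scare} has $-H_\gamma$ in the $(2,1)$-block of $\Theta_\gamma$ and $+G_\gamma$ in the $(1,2)$-block of $\Phi_\gamma$, whereas the iteration here uses $+H_k$ and $-G_k$ respectively; one must either flip signs in the initialization or absorb them into the doubling factors $\Theta'_k,\Phi'_k$. Once this cosmetic adjustment is made, the induction carries through exactly as in the SDARE case and no new ideas are required.
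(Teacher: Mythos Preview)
Your proposal is correct and matches the paper's own treatment: the paper does not give an explicit proof of this lemma but states just before it that ``the whole story from here on will be nearly the same as that for SDAREs,'' so your plan to replay the proof of Lemma~\ref{thm:sdare-doubling} verbatim with $E_k$ in place of $A_k$ is exactly what is intended. Your flag on the sign conventions is well taken---the $(+H_k,-G_k)$ blocks in the statement of Lemma~\ref{thm:sdare-doubling:scare} are inconsistent with the $(-H_\gamma,+G_\gamma)$ blocks in \cref{eq:Theta-Phi:scare}, so the base case of part~\ref{thm:sdare-doubling:item4:scare} only lines up after the cosmetic sign flip you describe; this is a typo-level issue in the paper rather than a gap in your argument.
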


For the case that $r=0$, \cref{thm:sdare-doubling:scare} degenerates into the doubling method for CAREs (see, e.g., \cite{huangLL2018structurepreserving}).

\begin{theorem}[Convergence of doubling iteration for SCAREs]\label{thm:convergence-of-doubling-iteration-for-scare}
	The sequence $\set{H_k}$  generated by the doubling iteration \cref{eq:doubling:scare} is either finite or monotonically increasing, and converges to the unique positive semi-definite  stabilizing solution $X_\star$ of the SCARE~\cref{eq:scare-standard-form:scare} R-quadratically, namely
	\begin{equation*}\label{eq:R-quadratically-convergence:scare}
		\lim_{k\to\infty}\left(\frac{\N{H_k-X_\star}}{\N{X_\star}}\right)^{1/2^k}\le
		\liang{\rho_{F_\star}}<1,
	\end{equation*}
	where $\rho_{F_\star}:=\rho\big( \left[(A_{\liang{0}}+B_{\liang{0}}F+\gamma I_{\liang{n}})\otimes(A_{\liang{0}}+B_{\liang{0}}F+\gamma I_{\liang{n}})+ 2\gamma\sum\limits_{i=1}^{\liang{r-1}}(A_i+B_iF)\otimes(A_i+B_iF)\right]\*(A_{\liang{0}}+B_{\liang{0}}F-\gamma I_{\liang{n}})^{-1}\otimes(A_{\liang{0}}+B_{\liang{0}}F-\gamma I_{\liang{n}})^{-1}  \big)$.
\end{theorem}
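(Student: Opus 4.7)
The plan is to reduce the statement directly to \cref{thm:convergence-of-doubling-iteration-for-sdare} via the equivalence established in \cref{cor:scare-to-sdare:scare}. By that theorem, the SCARE \cref{eq:scare-standard-form:scare} is equivalent to the SDARE \cref{eq:sdare:scare} with coefficient data $E_\gamma, G_\gamma, H_\gamma$, sharing the same unique positive semi-definite stabilizing solution $X_\star$, and conditions \ref{item:item1}--\ref{item:item3} are all met. On the other hand, the doubling iteration \cref{eq:doubling:scare} initialized at $(E_\gamma,G_\gamma,H_\gamma)$ is, by inspection, identical in form to the SDARE doubling iteration \cref{eq:doubling} under the substitution $A\leftarrow E_\gamma$, $BB^{\T}\leftarrow G_\gamma$, $C^{\T}C\leftarrow H_\gamma$. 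Hence \cref{thm:sdare-doubling:scare,thm:doubling,thm:convergence-of-doubling-iteration-for-sdare} apply verbatim, giving that $\{H_k\}$ is either finite or monotonically increasing, converges to $X_\star$, and satisfies the R-quadratic bound with rate $\rho(\op S_{F_\star}^*)=\rho(\op S_{F_\star})$, where $\op S_{F_\star}$ is the Lyapunov operator associated with the SDARE \cref{eq:sdare:scare}.

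The remaining task is to show that this SDARE rate equals $\rho_{F_\star}$ as defined in the statement. For this, I would start from \cref{eq:Theta-Phi-invariant-subspace:scare} evaluated at $X=X_\star$, which yields the closed-loop identity
\[
(I_{rn}+G_\gamma\ltimes X_\star)^{-1}\ltimes E_\gamma
=\wtd\Pi\begin{bmatrix} A_F+\gamma I_n \\ \sqrt{2\gamma}\,\what A_F \end{bmatrix}(A_F-\gamma I_n)^{-1},
\]
and then invoke \cref{eq:close-loop-matrix} (applied to the equivalent SDARE) to identify the right-hand side with the closed-loop matrix $\wtd A_{\mathrm{SDARE}}+\wtd B_{\mathrm{SDARE}}F_\star$ whose Kronecker-square sum defines $\rho(\op S_{F_\star})$. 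Reading off the $r$ blocks of this closed-loop matrix gives $(A_F+\gamma I_n)(A_F-\gamma I_n)^{-1}$ on the first block and $\sqrt{2\gamma}\,(A_i+B_i\what F_\star)(A_F-\gamma I_n)^{-1}$ for $i=1,\dots,r-1$ on the remaining blocks.

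Summing their Kronecker squares and pulling out the common right factor $(A_F-\gamma I_n)^{-1}\otimes(A_F-\gamma I_n)^{-1}$ produces exactly the expression in the definition of $\rho_{F_\star}$, after using the reformulation $A=A_0-B_0R^{-1}L^{\T}$, $B=B_0R^{-1/2}$, and the relation $A+B\what F_\star=A_0+B_0 F_\star$ (and analogously $\what A+\what B\what F_\star$ unpacks to $A_i+B_iF_\star$ for $i\geq 1$). The main obstacle is precisely this bookkeeping: tracking the permutation $\wtd\Pi$, the shift $\gamma$ appearing asymmetrically in the first block versus the rest, and the change of variables between the original and standard coefficients, so that the two spectral-radius expressions become literally equal rather than merely similar. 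Once this matching is verified, the R-quadratic bound \cref{eq:R-quadratically-convergence} transferred from \cref{thm:convergence-of-doubling-iteration-for-sdare} immediately yields the claimed asymptotic rate $\rho_{F_\star}<1$.
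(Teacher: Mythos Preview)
Your proposal is correct and follows exactly the route the paper intends: the paper omits the proof entirely, saying ``the whole story from here on will be nearly the same as that for SDAREs,'' so the argument is precisely the reduction via \cref{cor:scare-to-sdare:scare} to \cref{thm:convergence-of-doubling-iteration-for-sdare} that you outline. Your identification of the SDARE rate $\rho(\op S_{F_\star})$ with $\rho_{F_\star}$ via the closed-loop expression $\wtd\Pi\bigl[\begin{smallmatrix}A_F+\gamma I\\ \sqrt{2\gamma}\what A_F\end{smallmatrix}\bigr](A_F-\gamma I)^{-1}$ is the right computation (it is the operator $\wtd{\op S}$ already appearing at the end of the proof of \cref{cor:scare-to-sdare:scare}), and the bookkeeping you flag --- unwinding $\wtd\Pi$, using $A_F=A_0+B_0F_\star$ and $\what A_F=\Pi(\wtd A+\wtd BF_\star)$, then matching Kronecker squares up to transpose and $\rho(AB)=\rho(BA)$ --- goes through without obstruction.
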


\section{\liang{Concluding Remarks}}\label{sec:conclusion}
In this paper we demonstrate that the stochastic AREs are essentially the deterministic AREs in the sense that all the matrix products are understood as the left semi-tensor products.
As a by-product, the fixed point iteration and the doubling iteration \liang{would play a role in acquiring the approximations to the solutions}.

\liang{However, the two iterations could not be straightforwardly used as mature numerical methods to solve the equations, 
because the left semi-tensor products make the size of involving matrices grow twice-exponentially ($r^{2^k}n$ in fact), which makes the storage an impossible task.
Take the doubling iteration \cref{eq:doubling} or \cref{eq:doubling:scare} as an example: if $n=1,r=2$, 
then the numbers of rows of first several terms $A_k$ or $E_k$ (also the number of rows/columns of $G_k$) are $2,4,16,256,65536$.
Hence more work needs to be done on developing practical algorithms, though the algebraic structure is revealed as clearly as the deterministic AREs.
}

Anyway, as we can see, many parallel theoretical results and numerical methods for DAREs and CAREs can probably be generalized to SDAREs and SCAREs. 
Plenty of results are ready to be examined,
and of course a lot of gaps are still needed to be filled.
\liang{
	We believe that there must be efficient algorithms proposed under the philosophy of this paper, and we leave it for future work.
}

\bibliographystyle{plain}
\bibliography{SIMAX/SAREeasy-min}

\end{document}